\documentclass{article}
\usepackage{amssymb}
\usepackage{amsthm}
\usepackage{amsmath}
\usepackage{graphicx}
\usepackage{graphicx}
\usepackage{float}
\usepackage{subfigure}
\usepackage{indentfirst}
\usepackage{amsthm}
\newtheorem{theorem}{Theorem}[section]
\newtheorem{lemma}[theorem]{Lemma}
\newtheorem{problem}[theorem]{Problem}
\newtheorem{definition}[theorem]{Definition}
\newtheorem{corollary}[theorem]{Corollary}

\theoremstyle{plain}
\newtheorem{claim}{Claim}
\theoremstyle{definition}

\usepackage{caption}
\usepackage{amssymb}
\usepackage{hyperref}
\usepackage{geometry}
\usepackage{pifont}
\usepackage{tikz}
\usepackage{url}
\usepackage{amsmath,amssymb,amsfonts}
\usepackage{color}
\usepackage{setspace}
\usepackage{cite}
\usepackage[utf8]{inputenc}
\usepackage[misc]{ifsym}
\title{\huge On  minimally 1-tough  $(K_1\cup P_4)$-free graphs\thanks{{Supported by the National Natural Science Foundation of China (No. 12271311) and the Natural Science Foundation of Shandong Province (No. ZR202102250232).}}}
\author{Shiyu Cao, Jing Chen, Wei Zheng\footnote{\scriptsize Corresponding author.
E-mail addresses: zhengweimath@163.com (W. Zheng).},\\
\small School of Mathematics and Statistics, Shandong Normal University, Jinan 250358, Shandong, China}
\date{}

\usepackage[numbers,sort&compress]{natbib}
\hypersetup{colorlinks=true,linkcolor=black,citecolor=black,urlcolor=black}

\begin{document}

	\maketitle
	\vspace{-2.5cm}
	\vskip20pt \baselineskip12pt \vskip40pt\baselineskip16pt

	\noindent {\bf Abstract.}
	\noindent A graph $G$ is minimally $t$-tough if the toughness of $G$ is $t$ and the deletion of any edge from $G$ decreases its
	toughness, where $t$ is a positive real number.
	It is conjectured that every $(K_1\cup P_4)$-free 1-tough graph  is hamiltonian. In this paper, we characterize the structure of minimally 1-tough   $(K_1\cup P_4)$-free graphs, and thus show that the above conjecture is true for minimally 1-tough graphs. Furthermore, it is also proved that the Kriesell's conjecture which states that each minimally 1-tough graph has  a vertex of degree 2  holds for minimally 1-tough   $(K_1\cup P_4)$-free graphs.
	\rm
	
	\vskip10pt
	
	\noindent {\bf Keywords.}
	\noindent toughness; hamiltonian graph; minimally $t$-tough graphs;   $(K_1\cup P_4)$-free.
	
	\section{Introduction}
	
	All	graphs considered in this paper are simple and finite.
	Let $\omega(G)$ denote the number of   components of a graph
	$G$.
	Let $P_n$ be a path on $n$ vertices, and let $C_n$ be a cycle on $n$ vertices.
	For a vertex subset $S$ of $G$, the notation $G-S$ represents the graph obtained by deleting all the vertices in $S$ from $G$, along with the edges incident to those vertices.
	We say $S$ is a  vertex cut of $G$ if the subgraph $G-S$ is disconnected.
	In 1973, Chvátal  \cite{chvatal1973tough} introduced the notion of toughness.

	\begin{definition}
		\emph{Let $t$ be a non-negative real number. A graph $G$ is  \sl $t$-tough \rm if $|S|\geq  t\cdot \omega(G-S)$ for any
			vertex cut $S$ of $G$. The  \sl toughness \rm   of $G$, denoted by $\tau(G)$, is the maximum number $t$ such that $G$ is $t$-tough if $G$ is non-complete, and let $\tau(G)=+ \infty$ if $G$ is a complete graph.}
	\end{definition}
	
	A graph $G$ is \sl hamiltonian \rm if it  has a cycle that contains all the vertices of $G$.
	The toughness of a graph is closely related to its hamiltonicity.
	In the process of studying the relationship between  toughness and hamiltonicity of a graph, Chvátal \cite{chvatal1973tough} proposed a  conjecture that there exists a constant $t_0$ such that each $t_0$-tough graph is hamiltonian.
	Bauer et al.~\cite{BauerNot every 2-tough} by proving the existence of a $(\frac{9}{4}-\epsilon)$-tough non-hamiltonian graph for every $\epsilon>0$, showed that if the above Chvátal's conjecture is true, then $t_0\geq \frac{9}{4}$.
	Chvátal's conjecture is still open.

	We use $H_1\cup H_2$ represents the union of two vertex-disjoint graphs $H_1$ and $H_2$.
	For a given graph $H$, we say a graph $G$ is \sl $H$-free \rm  if it does not contain an induced subgraph that is isomorphic to $H$, and  $H$ is called a \sl forbidden subgraph \rm   for $G$.
	It has been demonstrated that the above Chvátal's conjecture  holds true for some graphs with forbidden subgraph conditions.
	It is clear that if a graph is hamiltonian, then it is 1-tough.
	However, this conclusion is not true in reverse.
	It is interesting to consider which forbidden subgraphs $H$ guarantee that each 1-tough $H$-free graph is hamiltonian.
	Nikoghosyan \cite{Nikoghosyan} showed that every 1-tough $H$-free graph is hamiltonian if $H$ is $K_1\cup P_2$ or $K_1\cup P_3$, and  also  conjectured that each 1-tough $P_4$-free graph is hamiltonian, which has been proven by  Jung in \cite{jung}.
	Li et al.~\cite{LibinlP4K1} characterized all graphs $H$ such that every 1-tough $H$-free graph is hamiltonian and got the following two theorems.
	
	\begin{theorem}[\cite{LibinlP4K1}]
		\sl
		Let $R$ be an induced subgraph of $P_4$, $K_1\cup P_3$ or $2K_1\cup K_2$. Then every $R$-free 1-tough graph on at least three vertices is hamiltonian.
		\rm
	\end{theorem}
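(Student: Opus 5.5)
The plan is to reduce the statement to its three maximal cases, then handle each with a longest-cycle argument. The reduction is immediate: if $R$ is an induced subgraph of $H$, then every $R$-free graph is $H$-free. So it suffices to prove hamiltonicity of 1-tough graphs that are $P_4$-free, $(K_1\cup P_3)$-free, or $(2K_1\cup K_2)$-free. The $P_4$-free case is exactly Jung's theorem \cite{jung}, which settled Nikoghosyan's conjecture. The $(K_1\cup P_3)$-free case is due to Nikoghosyan \cite{Nikoghosyan}, so I would quote both results. The new work is the $(2K_1\cup K_2)$-free case, and I would still sketch the common method so the argument is uniform.

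For $G$ 1-tough, $(2K_1\cup K_2)$-free, on $n\ge 3$ vertices, I proceed as follows.
\begin{enumerate}
\item Since $G$ is 1-tough and non-complete (complete graphs are trivially hamiltonian), it is 2-connected. Hence a longest cycle $C$ exists, and by Dirac $|C|\ge \min\{n,2\delta\}$.
\item Suppose $C$ is not hamiltonian and let $H$ be a component of $G-V(C)$. Let $x_1,\dots,x_k$ be its attachment vertices on $C$, ordered along $C$, with $k\ge 2$ by 2-connectivity.
\item By the standard maximality arguments, the successors $x_i^+$ are pairwise nonadjacent and have no neighbours in $H$. Likewise $x_i^+x_j^{++}$-type crossing chords are excluded.
\item Apply the forbidden subgraph. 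Take an edge $hx_i$ with $h\in H$, or an edge inside $H$ if $|H|\ge 2$. Together with two successors $x_j^+,x_l^+$, this gives a copy of $2K_1\cup K_2$ unless some adjacency appears. Using $x_i x_i^+$ with $h$ and another successor works the same way. This forces strong adjacency: essentially every $x_j^+$ must be adjacent to $x_i$ or to $x_i^+$ for all $i\ne j$.
\item Show $|H|=1$, say $H=\{h\}$. If $H$ contains an edge $uv$, then $uv$ together with $x_1^+,x_2^+$ (nonadjacent to each other and to $H$) is an induced $2K_1\cup K_2$, a contradiction. The same reasoning shows every component of $G-V(C)$ is a single vertex.
\end{enumerate}

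The toughness contradiction comes next. Let $S=\{x_1,\dots,x_k\}=N(h)$ and count components of $G-S$. The vertex $h$ is isolated. I aim to show the successor segments $x_i^+\ldots x_{i+1}^-$ form at least $k$ components, or, if they are joined, to find a longer cycle. An edge between two segments, combined with the structure forced by step 4, should let me reroute $C$ through $h$, contradicting maximality. If no such edge exists, then $\omega(G-S)\ge k+1>|S|$, contradicting 1-toughness.

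The main obstacle is exactly this last step: edges between segments, or chords like $x_i^+x_j^{-}$, might not directly yield a longer cycle. There I would use freeness once more on the edge $x_i^+x_i^{++}$ paired with $h$ and $x_j^+$. Since $h$ is adjacent to neither and $x_j^+x_i^+\notin E$, we need $x_j^+x_i^{++}\in E$. That gives the crossing chord pattern $x_i^+\ldots$ / $x_j^+x_i^{++}$, which together with $h$ produces a longer cycle $x_i h x_j \overleftarrow{C} x_i^{++} x_j^+ \overrightarrow{C} x_i$, contradicting maximality. A careful case split on whether segments have length 1 finishes the argument.
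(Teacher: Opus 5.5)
The reduction to the three maximal forbidden graphs is fine, and so is quoting Jung for $P_4$ and Nikoghosyan for $K_1\cup P_3$. The paper itself only cites this theorem from Li et al., so there is no proof in the paper to compare against. Everything therefore rests on your $(2K_1\cup K_2)$-free case, and there the decisive step is wrong.

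The cycle you exhibit at the end, $x_i h x_j \overleftarrow{C} x_i^{++} x_j^+ \overrightarrow{C} x_i$, contains every vertex of $C$ except $x_i^+$ and gains only $h$. It has exactly $|C|$ vertices and contradicts nothing. The same miscount underlies your claim in step 3 that chords $x_i^+x_j^{++}$ are excluded: once $H=\{h\}$, rerouting through such a chord just exchanges $x_j^+$ for $h$. Maximality of $C$ forbids edges $x_i^+x_j^+$ (and $x_i^-x_j^-$), not these shifted chords. In step 4, the set $\{h,x_i,x_i^+,x_j^+\}$ contains the path $hx_ix_i^+$, so it never induces $2K_1\cup K_2$ and forces no adjacency.

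This is not something a ``careful case split'' repairs; it is the heart of the problem.
\begin{itemize}
\item Since $h$ has no neighbour in $W=V(G)\setminus N[h]$, the graph $G[W]$ must be $(K_1\cup K_2)$-free, i.e.\ complete multipartite.
\item The successors are pairwise nonadjacent, so they all lie in one part.
\item Hence, as soon as some segment $x_i^+\ldots x_{i+1}^-$ has two vertices, $x_i^{++}$ is adjacent to every $x_j^+$. Then $G[W]$ has an edge, so it is connected, and $G-N(h)$ has exactly two components.
\end{itemize}
So in that case edges between segments always exist (your forced chord $x_j^+x_i^{++}$ is one of them). The cut $S=N(h)$ can then never contradict 1-toughness.

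Your count only works when every segment is a single successor. There it does give $\omega(G-N(h))\ge k+1$, once you also note that a vertex off $C$ adjacent to two successors would yield a longer cycle.

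For the general case you need a genuinely new ingredient: either a rerouting that uses several chords at once and really lengthens $C$, or a separating set different from $N(h)$. As written, the only part of the theorem not covered by citation remains unproved.
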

	
	\begin{theorem}[\cite{LibinlP4K1}]
		\sl
		Let $R$ be a graph on at least three vertices. If every $R$-free 1-tough graph on at least three vertices is hamiltonian, then $R$ is an induced subgraph of $K_1\cup P_4$.
		\rm
	\end{theorem}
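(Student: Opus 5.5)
The plan is the standard "forbidden subgraph via counterexamples" argument. Suppose $R$ has at least three vertices and every $R$-free 1-tough graph on at least three vertices is hamiltonian. Then for every 1-tough non-hamiltonian graph $G$, the graph $G$ is not $R$-free, so $R$ must be an induced subgraph of $G$. I will therefore exhibit a small collection of 1-tough non-hamiltonian graphs $G_1,\dots,G_k$. Each one will be chosen to rule out one structural feature of $R$. The intersection of their families of induced subgraphs (on at least three vertices) should then be exactly the induced subgraphs of $K_1\cup P_4$.

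\textbf{Step 1 ($R$ is a forest).} Take a 1-tough non-hamiltonian \emph{chordal} graph, for instance a suitable split graph. Such a graph can be built from a clique $A$ and an independent set $B$ with $|B|=|A|+1$. The edges between $A$ and $B$ are chosen so that $B\cup A$ admits no hamiltonian cycle, because $B$ would need $|B|$ distinct gaps in $A$. The adjacencies are nevertheless spread out enough that every cut $S$ satisfies $|S|\ge\omega(G-S)$. Since $R$ is an induced subgraph of a chordal graph, every cycle of $R$ must be a triangle. Pairing this graph with a triangle-free 1-tough non-hamiltonian graph, such as the Petersen graph (toughness $4/3$), excludes triangles and hence all cycles. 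So $R$ is a forest, and the Petersen graph also gives maximum degree at most $3$.

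\textbf{Step 2 ($R$ is a linear forest).} Next take a \emph{claw-free} 1-tough non-hamiltonian graph, for example a line graph or an inflation of a suitable cubic non-hamiltonian graph. This excludes $K_{1,3}$, so $R$ has maximum degree at most $2$. Together with Step 1, $R$ is a disjoint union of paths.

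\textbf{Step 3 (bounding the paths).} To pin down which linear forests survive, I will use 1-tough non-hamiltonian graphs with very small independence number and short induced paths. For example, I would take a graph whose vertex set is covered by three or four cliques with carefully chosen joins, in the spirit of Chvátal's and Bauer's small examples, built so that:
\begin{itemize}
\item every induced linear forest has at most two components;
\item in a two-component induced linear forest, one component is a single vertex;
\item no induced path has more than four vertices.
\end{itemize}
A second graph of this type should exclude $2K_2$ and $3K_1$-type configurations together with $P_5$. Combining these constraints leaves exactly the induced subgraphs of $K_1\cup P_4$, namely $P_3$, $P_4$, $K_1\cup P_2$, $K_1\cup P_3$, $K_1\cup P_4$, and $2K_1\cup K_2$-type cases, each of which is contained in $K_1\cup P_4$.

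\textbf{Main obstacle.} I expect the difficulty to lie in Step 3, and to some extent Step 1. There one must design explicit small graphs and verify both 1-toughness (a finite case check over cuts) and non-hamiltonicity (typically via a counting argument on an independent set $B$ whose vertices need more than $|N(B)|$ connecting segments). At the same time one must check that their induced linear forests are exactly those of $K_1\cup P_4$. I would first try the 7-vertex-type example consisting of a triangle with pendant structures merged into cliques, and then adjust the joins until the toughness check passes.
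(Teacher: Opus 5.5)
The paper does not prove this statement; it only quotes it from Li, Broersma and Zhang. So I compare your plan with the standard argument, which is the one you outline. Since $R$ must be an induced subgraph of every 1-tough non-hamiltonian graph, any 1-tough non-hamiltonian $H$-free graph shows that $R$ is $H$-free. Step 2 is fine once you justify toughness: a non-complete claw-free graph has $\tau=\kappa/2$ (Matthews--Sumner). Hence any 2-connected claw-free non-hamiltonian graph works, e.g.\ the inflated Petersen graph.

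\textbf{Step 1 fails as written.} A split graph with clique $A$ and independent set $B$, $|B|=|A|+1$, is never 1-tough. Taking $S=A$ gives $\omega(G-S)=|B|>|S|$, however the $A$--$B$ edges are spread. The gap-counting obstruction you invoke is exactly a violation of 1-toughness, so in a 1-tough split graph ($|B|\le|A|$) the obstruction to hamiltonicity must be local.

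A working example is the graph $G^*$: a $K_4$ on $\{z,x_1,x_2,x_3\}$ plus vertices $y_1,y_2,y_3$ with $N(y_i)=\{z,x_i\}$. For $S\subseteq A$, the worst cuts $\{z,x_i\}$, $\{z,x_i,x_j\}$ and $A$ leave $2,3,3$ components, and putting vertices of $B$ into $S$ never helps, so $G^*$ is 1-tough. It is non-hamiltonian because each $y_i$ has degree $2$. This forces all three edges $zy_i$ into a Hamilton cycle, giving $z$ three cycle edges.

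\textbf{Step 3 is the real gap: the graphs you aim for cannot exist.} A non-complete 1-tough graph is 2-connected, so $\alpha(G)\le 2$ would make it hamiltonian by Chv\'atal--Erd\H{o}s. Equivalently, apply the companion theorem quoted just before this one with $R=3K_1$. Thus every 1-tough non-hamiltonian graph contains an induced $3K_1$, i.e.\ an induced linear forest with three components. So your first bullet and the plan to ``exclude $3K_1$-type configurations'' are unattainable. Even if they were attainable, they would also exclude $K_1\cup P_4$ itself. Your final list in fact drops $3K_1$, which is an induced subgraph of $K_1\cup P_4$.

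The correct target is this: a linear forest is an induced subgraph of $K_1\cup P_4$ if and only if it is $\{P_5,2K_2,4K_1\}$-free.
\begin{itemize}
\item $2K_2$-freeness leaves at most one component with an edge.
\item $P_5$-freeness makes that component a path on at most four vertices.
\item $4K_1$-freeness bounds the number of isolated vertices by $3$ minus the independence number of that path.
\end{itemize}
So you need 1-tough non-hamiltonian graphs that are $P_5$-free, $2K_2$-free and $4K_1$-free. The single graph $G^*$ is all three at once: it is a split graph, hence chordal and $2K_2$-free (so also $P_5$-free), and $\alpha(G^*)=3$.

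Three graphs then finish the proof with no further ``carefully chosen joins'': $G^*$ (chordal), the Petersen graph (triangle-free, $\tau=4/3$) and the inflated Petersen graph (claw-free). Together they force $R$ to be a $\{P_5,2K_2,4K_1\}$-free linear forest.
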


	It is evident that every induced subgraph of $K_1\cup P_4$ is either the graph $K_1\cup P_4$ itself or an induced subgraph of $P_4$, $K_1\cup P_3$, or $2K_1\cup K_2$.
	Combining with  the above two theorems, it can be seen that $K_1\cup P_4$ is the only remaining case to  characterize all graphs $H$ such that every $H$-free 1-tough graph is hamiltonian.
	Li et al.~\cite{LibinlP4K1} raised the following problem, which
	was also proposed by Nikoghosyan in \cite{Nikoghosyan}.

	\begin{problem}[\cite{LibinlP4K1}]
		\sl
		\label{problem}
		Is every $(K_1\cup P_4)$-free 1-tough graph on at least three vertices hamiltonian?
		\rm
	\end{problem}
	
	This problem seems difficult to solve up to now.
	It is also hard even to consider whether each  $(K_1\cup P_4)$-free $t$-tough graph with $t>1$ is  hamiltonian.
	However, upon imposing an additional condition requiring these graphs to be triangle-free,
	Zheng et al.~\cite{1-tough triangle-free} proved that every 1-tough $\{\triangle, K_1\cup P_4\}$-free graph on at least three vertices is hamiltonian. Here, the symbol $\triangle$ denote the triangle.
	In this paper, we give a positive answer to Problem \ref{problem} for graphs with toughness  exactly 1 and the deletion of each
	edge will decrease its toughness.
	In fact, the graph mentioned above is called a \sl minimally 1-tough graph\rm,
	which was introduced by Broersma et al.~in \cite{broersma1999various}.
	
	\begin{definition}
		
		\emph{Let $t$ be a positive real number. A graph $G$ is  a  \sl minimally $t$-tough graph \rm if $\tau(G)=t$ and $\tau(G-e) < t$ for each edge $e$ of $G$.}
	\end{definition}

	In this paper, we completely characterize the structure of minimally 1-tough   $(K_1\cup P_4)$-free graphs.

	\begin{theorem}
		\sl
		\label{Th1}
		Let $G$ be a minimally 1-tough   $(K_1\cup P_4)$-free graph. Then $G$ is $C_4$, $C_5$ or $C_6$.
		\rm
	\end{theorem}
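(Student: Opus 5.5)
We will exploit the basic structural fact about minimally $1$-tough graphs: for every edge $e=uv$ of $G$, since $\tau(G-e)<1$, there is a vertex set $S=S(e)\subseteq V(G)\setminus\{u,v\}$ with $\omega((G-e)-S)>|S|$. Because $G$ itself is $1$-tough, this forces $\omega((G-e)-S)=|S|+1$ and $\omega(G-S)=|S|$. Moreover, $u$ and $v$ lie in different components of $(G-e)-S$. Note that $S$ may be empty, in which case $e$ is a bridge of a connected graph; this cannot happen, since a $1$-tough graph on at least three vertices is $2$-connected. We also note that $G$ is non-complete, since complete graphs have infinite toughness. Consequently $|S|\ge 2$ whenever $S$ is nonempty, and every vertex has degree at least $2$.

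\textbf{Step 1: limit the number of components.} Take any edge $e=uv$ and its set $S$, and let $C_u,C_v$ be the components of $(G-e)-S$ containing $u$ and $v$. Since $|S|\ge 1$, there are $|S|+1\ge 2$ components in total. Suppose there is a third component $D$, and pick $w\in D$. If $C_u$ or $C_v$ contains an induced $P_4$ through $e$, the isolated-from-it vertex $w$ gives an induced $K_1\cup P_4$. More usefully, pick $x\in C_u$ and $y\in C_v$ adjacent to $u$ and $v$ respectively when these exist. Then $x u v y$ is an induced $P_4$, because there are no edges between $C_u$ and $C_v$ other than $e$. Together with $w$ it forms $K_1\cup P_4$. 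Hence, in any situation with at least three components, both $u$ and $v$ are isolated within their own components.

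The main work is to push this to full control. We would show that every component of $(G-e)-S$ is small, probably a single vertex, or else $|S|=1$ (excluded above). Using $(K_1\cup P_4)$-freeness repeatedly, with an outside component supplying the $K_1$ and paths through $S$ supplying the $P_4$, we aim to show: $|S|=2$, the components are few, and every vertex of $S$ has a restricted neighbourhood.

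\textbf{Step 2: bound the degrees and conclude $G$ is a cycle.} Next we argue that every vertex has degree $2$. Suppose a vertex $z$ has degree at least $3$. Consider an edge $e$ at $z$ and the set $S(e)$. Combining the previous step with $1$-toughness of $G$ (which means $\omega(G-S)\le|S|$), we derive either a cut violating $1$-toughness or an induced $K_1\cup P_4$. The intended mechanism is as follows. Minimality forces $z$'s other edges to also be "critical", so their cut sets are small. A vertex far from a long induced path then produces $K_1\cup P_4$, and otherwise $G$ is dense enough that a deleted edge is not critical.

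Once $G$ is $2$-regular and connected, it is a cycle $C_n$. Cycles are minimally $1$-tough for $n\ge4$ ($C_3=K_3$ is complete). The graph $C_n$ is $(K_1\cup P_4)$-free iff $n\le 6$: $C_7$ contains $P_4$ plus a vertex at distance at least $2$ from it, while in $C_4,C_5,C_6$ the vertices outside any induced $P_4$ are adjacent to it. This yields exactly $C_4,C_5,C_6$.

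The main obstacle is Step 2, showing there are no vertices of degree at least $3$. Here we would first try a minimum-counterexample style analysis on an edge $e=uv$ with $\deg u\ge3$. We would carefully case on whether $S(e)$ contains a neighbour of $u$, and use the fact that two adjacent vertices of $S$ plus component vertices readily create induced $P_4$'s.
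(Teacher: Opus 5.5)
Your proposal has two false steps, and it leaves the essential part unproved.

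First, the claim ``$|S|\ge 2$ whenever $S$ is nonempty'' does not follow from what precedes it, and it is false. The set supplied by minimality may be a single vertex $s$ that does not disconnect $G$ but makes $e$ a bridge of $G-s$. For $C_4$ this is the only possibility for every edge: for $e=uv$, deleting the two vertices outside $\{u,v\}$ leaves one component, not two. So your later appeal to ``$|S|=1$ (excluded above)'' discards one of the very graphs in the conclusion. When $|S|=1$, $(G-e)-S$ has only two components, so taking the $K_1$ from a third component is unavailable. The paper handles this as a separate case:
(i) induced paths have at most five vertices, so the BFS depths on the two sides of the bridge $e$ of $G-s$ sum to at most $3$;
(ii) the BFS layers are independent, and consecutive layers induce complete bipartite graphs;
(iii) each side is then forced to be a path whose far end is adjacent to $s$, so $G$ is hamiltonian.

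Second, for $|S|\ge 2$ your Step 1 argument shows only that $u$ and $v$ cannot \emph{both} have neighbours in their own components. It does not show that both are isolated. In $C_6$ with vertices $1,\dots,6$ in cyclic order, $e=12$ and $S=\{4,6\}$, the components of $(G-e)-S$ are $\{1\}$, $\{2,3\}$, $\{5\}$. The configuration where the component of $G-S$ containing $e$ consists of $u$, $v$ and some neighbours of $v$ needs its own lengthy analysis.

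The decisive gap is Step 2, which records intentions rather than an argument. Every edge is critical by definition, nothing you proved makes the sets $S(e)$ small, and ``dense enough that a deleted edge is not critical'' is not a mechanism. What is missing are the tools that drive the paper's proof.

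(i) An extremal choice of the edge. Maximise $g(uv)=|N(u)\cap N(v)|$, and then $f(uv)=|N(u)\cup N(v)|$. Since $N(u)\cap N(v)\subseteq S$, for $|S|=1$ this gives $g\le 1$. If $g=0$, then $G$ is triangle-free, which is what makes the BFS layers independent. When $g(uv)=0$ and $|S|\ge 3$, $(K_1\cup P_4)$-freeness makes each neighbour $s\in S$ of $u$ adjacent to every vertex of $G-S$ outside the component containing $e$. Comparing $f(uv)$ with $f(us)$ and similar edges then forces $|S|=2$ and pins down the structure.

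(ii) The two-component lemma. If $|S(xy)|\ge 2$, each vertex of $S(xy)$ meets at least two components of $G-S(xy)$; otherwise deleting it from $S(xy)$ violates $1$-toughness. Hence no vertex $z$ with $N(z)\subseteq N(x)\cup N(y)$ lies in $S(xy)$. Every cycle through $xy$ must meet $S(xy)$. So take a short cycle through $xy$ whose other vertices are all dominated in this way, plus a second cycle through $xy$ avoiding them (which gives $|S(xy)|\ge 2$). Together they yield a contradiction, and this is how the paper kills twin-like configurations.

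(iii) Vertices of triangles in a minimally $1$-tough graph have degree at least $3$.

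Finally, the paper never proves $2$-regularity directly. It determines enough structure to exhibit a Hamilton cycle. It then uses that a hamiltonian minimally $1$-tough graph equals its Hamilton cycle, since any chord can be deleted without the toughness dropping below $1$. Your closing check that among cycles only $C_4$, $C_5$, $C_6$ are $(K_1\cup P_4)$-free is correct, but it is the easy end of the proof.
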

	
	Using Theorem \ref{Th1}, it can be obtained that every minimally 1-tough   $(K_1\cup P_4)$-free graph is  hamiltonian, and indicates that Problem \ref{problem} holds for minimally 1-tough   $(K_1\cup P_4)$-free graphs.
	
	\begin{corollary}
		\sl
		\label{th2}
		Every minimally 1-tough   $(K_1\cup P_4)$-free graph on at least three vertices is hamiltonian.
		\rm
	\end{corollary}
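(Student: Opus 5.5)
Corollary \ref{th2} follows directly from Theorem \ref{Th1}; I will argue it by a short reduction rather than by any new structural work.

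First, let $G$ be a minimally 1-tough $(K_1\cup P_4)$-free graph on at least three vertices. I will note that $G$ is not complete. Indeed, if $G$ were complete then $\tau(G)=+\infty\neq 1$, contradicting $\tau(G)=1$. Hence $G$ falls under the hypotheses of Theorem \ref{Th1}.

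Second, I apply Theorem \ref{Th1} to conclude that $G$ is isomorphic to one of $C_4$, $C_5$ or $C_6$.

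Third, I observe that every cycle $C_n$ with $n\ge 3$ is trivially hamiltonian, since the cycle itself passes through all of its vertices. Therefore $G$ is hamiltonian.

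There is no real obstacle in this argument: all the difficulty lies in the structural characterization in Theorem \ref{Th1}, which I am assuming. The one point needing care is the remark that $G$ is non-complete, so that the toughness condition is meaningful. That point is not even needed for the conclusion, because any graph satisfying the hypotheses is already one of the three cycles by Theorem \ref{Th1}. The hypothesis of at least three vertices is automatically satisfied by $C_4$, $C_5$ and $C_6$, and it is kept only to match the formulation of Problem \ref{problem}.
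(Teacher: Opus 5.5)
Your proposal is correct and matches the paper's argument: the corollary follows directly from Theorem~\ref{Th1}, since $C_4$, $C_5$ and $C_6$ are hamiltonian. Your remark that $G$ cannot be complete (because $\tau(K_n)=+\infty$) is harmless but, as you note yourself, not needed.
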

	
	By the definition of toughness,
	it is obvious that each non-complete $t$-tough graph is $\lceil 2t \rceil$-connected, and thus the minimum degree of which is at least $\lceil 2t \rceil$.
	In 2003, Kriesell \cite{kaiser2003problems} stated a conjecture that each minimally 1-tough graph has a vertex of degree 2. According to 	Theorem \ref{Th1}, we also confirm
	Kriesell's conjecture  for minimally 1-tough   $(K_1\cup P_4)$-free graphs.

	\begin{corollary}
		\sl
		\label{th3}
		The minimum degree of each minimally 1-tough $(K_1\cup P_4)$-free graph is 2.
		\rm
	\end{corollary}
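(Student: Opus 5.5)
The statement to prove is Corollary \ref{th3}: every minimally 1-tough $(K_1\cup P_4)$-free graph has minimum degree $2$. I will derive it directly from Theorem \ref{Th1}, so almost no new work is needed.

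Let $G$ be a minimally 1-tough $(K_1\cup P_4)$-free graph. By Theorem \ref{Th1}, $G$ is isomorphic to $C_4$, $C_5$ or $C_6$. Every cycle is $2$-regular, so every vertex of $G$ has degree exactly $2$. Hence $\delta(G)=2$, and in particular $G$ has a vertex of degree $2$, as Kriesell's conjecture asserts.

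As a consistency check, the lower bound $\delta(G)\ge 2$ also follows independently. By definition $\tau(G)=1$ is finite, so $G$ is non-complete. As noted before the corollary, a non-complete $t$-tough graph is $\lceil 2t\rceil$-connected, so $G$ is $2$-connected and $\delta(G)\ge 2$. Theorem \ref{Th1} supplies the matching upper bound.

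There is no real obstacle here: all the difficulty lies in Theorem \ref{Th1}, which we may assume. The only point to check is that the three cycles in its conclusion are all $2$-regular, and this is immediate.
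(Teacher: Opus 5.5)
Your proposal is correct and follows the paper's route: the paper derives the corollary directly from Theorem~\ref{Th1}, since $C_4$, $C_5$ and $C_6$ are all $2$-regular. Your independent check that $\delta(G)\ge 2$ via $2$-connectedness is sound but not needed.
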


	The paper is organized as follows. In Section 2, we give some definitions and technical lemmas that will
	be used in the proof of  Theorem \ref{Th1} in Section 3.

	\section{Preliminaries  and notations}
	
	Let $G=(V(G), E(G))$ be a simple graph.
	For a vertex $v$ of $G$, let $N_G(v)$ denote the neighbor set of $v$ in $G$ and denote	$N_G[v]=N_G(v) \cup \{v\}$.
	The distance between two vertices $u$ and $v$ in $G$, denoted by $d_G(u,v)$, is the length of a shortest $(u,v)$-path in $G$.
	For a  vertex subset $S$ of $G$, 	let $G[S]$ denote the induced subgraph of $S$ in $G$.
	If $v$ is adjacent to at least one vertex of $S$,  we say $v$ is adjacent to $S$.
	For two vertex subsets $S$ and $T$ of $G$, let
	$N_S(T)$ represent the set of neighbors of all vertices of $T$ in $S$ and $S-T$ represent the set obtained from $S$ by removing all the members of $T$.
	In particular, if $T$ contains exactly one vertex $v$, we write $N_S(T)$ as $N_S(v)$, and $S-T$ as $S-\{v\}$.
	A complete bipartite graph is defined as a simple bipartite graph with bipartition $(X,Y)$ such that each
	vertex of $X$ is adjacent to all the vertices of $Y$, here $X$ or $Y$ may be an empty set.
	Definitions and notations not mentioned here can be  referred to
	\cite{bondy2008graph}.
	
	The following lemma is given by Katona et al.~\cite{katona2018properties}, which characterizes a basic property of minimally 1-tough graphs.

	\begin{lemma}[\cite{katona2018properties}]
		\sl
		\label{1-tough}
		If $G$ is a minimally 1-tough graph, then
		for each edge $e$ of $G$, 
		 there exists a vertex subset $S$ of $G$ such that
		\begin{center}
			$\omega(G - S) = |S|$ and $\omega((G - e) - S) = |S| + 1.$
		\end{center}
		\rm
	\end{lemma}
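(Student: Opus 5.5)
The approach is to read off the set $S$ directly from the definition of toughness applied to $G-e$, and then use the $1$-toughness of $G$ to pin down the two component counts exactly.

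Fix an edge $e=uv$ of $G$. Since $G$ is minimally $1$-tough, $\tau(G)=1$, so $G$ is not complete. Because $G$ is non-complete and $1$-tough, it is $\lceil 2\cdot 1\rceil = 2$-connected, as remarked in the introduction. Moreover $\tau(G-e)<1$. As $G-e$ is not complete either, the definition of toughness gives a vertex cut $S$ of $G-e$ with $|S| < \omega((G-e)-S)$. Here $S=\emptyset$ is allowed if $G-e$ is disconnected. Since both sides are integers,
$$\omega((G-e)-S)\ge |S|+1 .$$
Deleting a single edge raises the number of components by at most one, so
$$\omega(G-S)\ \ge\ \omega((G-e)-S)-1\ \ge\ |S| .$$

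Next I compare with the $1$-toughness of $G$, splitting on whether $G-S$ is connected.

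\emph{Case 1: $S$ is a vertex cut of $G$.} Then $1$-toughness gives $\omega(G-S)\le |S|$. Combined with the inequalities above, this forces $\omega(G-S)=|S|$ and $\omega((G-e)-S)=|S|+1$.

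\emph{Case 2: $G-S$ is connected.} Then $\omega(G-S)=1\ge |S|$, so $|S|\le 1$, and also $\omega((G-e)-S)\le 2$.
\begin{itemize}
\item If $|S|=0$, then $G-e$ is disconnected, so $e$ is a bridge of $G$. This contradicts the $2$-connectivity of $G$: $G$ has at least three vertices, and removing an endpoint of a bridge whose other side is nontrivial disconnects $G$.
\item Hence $|S|=1$. Then $\omega(G-S)=1=|S|$ and $\omega((G-e)-S)=2=|S|+1$, as required.
\end{itemize}

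The only delicate point is the degenerate situation where $S$ is not a cut of $G$ itself, since the toughness inequality for $G$ then cannot be invoked. This is exactly what Case 2 handles, using the $2$-connectivity of non-complete $1$-tough graphs to exclude $S=\emptyset$.
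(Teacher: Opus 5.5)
Your proof is correct. The paper gives no proof of this lemma, citing it from Katona et al., and your derivation is the standard direct one: take a set $S$ witnessing $\tau(G-e)<1$, use integrality together with $\omega((G-e)-S)\le\omega(G-S)+1$, invoke the $1$-toughness of $G$, and treat the case where $G-S$ is connected separately, excluding $S=\emptyset$ because a $2$-connected graph has no bridge.

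One small wording fix in the bridge step. You should delete the endpoint $x$ of $e$ whose own component of $G-e$ contains a vertex other than $x$; such an endpoint exists since $|V(G)|\ge 3$. Deleting an endpoint whose opposite side is nontrivial need not disconnect $G$.
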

	
	For the vertex set  $S$ that satisfies the above lemma for the edge $e$, the vertex in $S$ also has some nice property when the number of $S$ satisfies certain conditions.

	\begin{lemma}
		\label{two components}
		\sl
		Let $G$ be a minimally 1-tough graph, and let $S$ be a vertex set which satisfies Lemma \ref{1-tough} for an edge $e$.
		If $|S|\geq 2$, then  each vertex of $S$ is adjacent to at least two components of $G-S$.
		\rm
	\end{lemma}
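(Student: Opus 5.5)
We argue by contradiction, using only that $G$ is 1-tough with $\omega(G-S)=|S|$ and $|S|\geq 2$. Suppose some vertex $v\in S$ is adjacent to at most one component of $G-S$, and set $S'=S-\{v\}$. Then $S'$ is nonempty because $|S|\geq 2$. The plan is to show that $S'$ is a vertex cut whose removal leaves at least $|S|>|S'|$ components, which contradicts $\tau(G)=1$.

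The key step is to count the components of $G-S'$. We obtain $G-S'$ from $G-S$ by adding back $v$ together with its edges. There are two cases.

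\emph{Case 1: $v$ has no neighbor in $G-S$.} Then $\{v\}$ is a component of $G-S'$ on its own, and the $|S|$ components of $G-S$ remain intact. Hence $\omega(G-S')=|S|+1$.

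\emph{Case 2: $v$ is adjacent to exactly one component $C$ of $G-S$.} Then $v$ merges into $C$ and every other component is unchanged. Hence $\omega(G-S')=|S|$.

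In either case $\omega(G-S')\geq |S|\geq 2$. So $S'$ is a vertex cut of $G$, and
$$|S'|=|S|-1<|S|\leq \omega(G-S').$$
This means $G$ is not 1-tough, contradicting $\tau(G)=1$. Therefore every vertex of $S$ is adjacent to at least two components of $G-S$.

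The only delicate point is making sure $S'$ really is a vertex cut, so that the toughness inequality applies to it. This needs both $|S|\geq 2$ (so that $\omega(G-S')\geq 2$) and $S'\neq\emptyset$, and both follow from the hypothesis $|S|\geq 2$. The edge $e$ and the second condition of Lemma \ref{1-tough} are not used; only $\omega(G-S)=|S|$ and the 1-toughness of $G$ are needed.
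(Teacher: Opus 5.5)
Your proposal is correct and follows essentially the same argument as the paper. The paper also deletes the offending vertex from $S$, notes that $\omega(G-S')\geq\omega(G-S)=|S|\geq 2$ so that $S'$ is a vertex cut, and concludes $1\leq |S'|/\omega(G-S')\leq (|S|-1)/|S|<1$.
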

	
	\begin{proof}[\bf{Proof.}]
		Since $|S|\geq 2$, we have   $\omega(G-S)=|S|\geq 2$.
		Suppose to the contrary that there exists a vertex  $s_i$ of $S$ such that $s_i$ is adjacent to at most one 
	component of $G-S$. 
	Take $S'=S-\{s_i\}$. Then either $\omega(G-S')=\omega(G-S)$ or $\omega(G-S')=\omega(G-S)+1$, which means that 
	$\omega(G-S')\geq \omega(G-S)$. Hence, $S'$ is a vertex cut of $G$.  
	Since $\tau(G)=1$ and  $\omega(G - S) = |S|$, we have  $$1=\tau(G)\leq \frac{|S'|}{\omega(G-S')}\leq\frac{|S|-1}{\omega(G-S)}=\frac{\omega(G-S)-1}{\omega(G-S)}<1,$$
	a contradiction. Therefore, each vertex  of $S$ is adjacent to at least two components of $G-S$.
	\end{proof}

	The following result characterizes the degree of vertices of each triangle  in  minimally 1-tough graphs, which is useful to prove the main result.

	\begin{lemma}[\cite{katona2018properties}]
		\sl
		\label{triangle}
		The degree of each vertex of any triangle in a minimally 1-tough graph is at least 3.
		\rm
	\end{lemma}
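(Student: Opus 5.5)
This lemma is cited from Katona et al., but I would prove it directly from Lemma \ref{1-tough} and Lemma \ref{two components}. Let $uvw$ be a triangle in a minimally 1-tough graph $G$. Since $G$ is 1-tough and not complete (a complete graph has infinite toughness), $G$ is 2-connected, so every vertex has degree at least $2$. Suppose, for a contradiction, that $d(u)=2$, so $N(u)=\{v,w\}$ with $vw\in E(G)$. Apply Lemma \ref{1-tough} to the edge $e=vw$: there is a set $S$ with $\omega(G-S)=|S|$ and $\omega((G-e)-S)=|S|+1$.

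\emph{Step 1: locating $v$, $w$ and $u$ relative to $S$.} Deleting $e$ must increase the number of components of $G-S$. Hence $v,w\notin S$, $v$ and $w$ lie in the same component $C$ of $G-S$, and $e$ is a bridge of $C$. If $u\notin S$, then $u\in C$ and the path $v u w$ avoids $e$ in $C$, contradicting that $e$ is a bridge. So $u\in S$.

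\emph{Step 2: contradiction via Lemma \ref{two components}.} The only neighbours of $u$ are $v$ and $w$, and both lie in the single component $C$ of $G-S$. Thus $u$ is adjacent to at most one component of $G-S$. If $|S|\ge 2$, this contradicts Lemma \ref{two components} directly. If $|S|=1$, then $S=\{u\}$ and $\omega(G-u)=1$. Removing $e$ splits $G-u$ into exactly two components, one containing $v$ and the other containing $w$. Then $\{u\}$ is a cut of $G-e$ with two components, which is consistent with $\tau(G-e)<1$ and gives no contradiction yet. For this case I would use the counting argument from the proof of Lemma \ref{two components} in the other direction. In $G$, the component $C=G-u$ has bridge $e$, so $\{v\}$ is a vertex cut of $G$ unless the $v$-side of the bridge is just $\{v\}$; symmetrically for $w$. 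Since $G$ is 2-connected, it has no cut vertex, so both sides are singletons and $G$ is the triangle $K_3$. But $K_3$ is complete, so its toughness is $+\infty$, not $1$, a contradiction.

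The main obstacle is the case $|S|=1$, since Lemma \ref{two components} does not apply there. The cut-vertex argument handles it: a nontrivial side of the bridge $vw$ in $G-u$ would make $v$ or $w$ a cut vertex of $G$, contradicting 2-connectedness. Hence every vertex of a triangle has degree at least $3$.
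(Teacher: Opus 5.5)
Your proof is correct. The paper itself gives no argument for this lemma; it only imports it from Katona et al. So the comparison is between a bare citation and your self-contained derivation from Lemma~\ref{1-tough} and Lemma~\ref{two components}.

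The derivation is not circular: the paper proves Lemma~\ref{two components} using only Lemma~\ref{1-tough} and $\tau(G)=1$, never the triangle lemma.

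Your case $|S|\ge 2$ is exactly the mechanism behind Lemma~\ref{two components}. Putting $u$ back into $G-S$ does not change the number of components, because both neighbours of $u$ lie in $C$. That violates $1$-toughness once $S-\{u\}$ is a genuine vertex cut.

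You are right that $|S|=1$ must be handled separately, and your cut-vertex argument settles it. One step there should be stated explicitly. The claim that a nontrivial $v$-side of the bridge makes $v$ a cut vertex of $G$ needs two facts:
\begin{itemize}
\item the two sides of $G-u$ are joined only by $e$, since $e$ is a bridge of $G-u=C$;
\item $u$ has no neighbour on the $v$-side other than $v$, which is where $N_G(u)=\{v,w\}$ is used a second time.
\end{itemize}
Also, your sentence about reusing the counting argument ``in the other direction'' is superfluous; the cut-vertex argument alone finishes that case.

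What your route buys is that the lemma becomes self-contained, relying only on Lemma~\ref{1-tough}, which the paper already uses throughout. The paper saves a paragraph by citing instead.
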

	
	Note that the toughness of a  hamiltonian graph is at least 1,
	and the deletion of an edge which is not contained by the Hamilton cycle will not reduce the toughness below 1.
	Thus, the following result holds obviously.

	\begin{lemma}
		\sl
		\label{hamiltonian}
		Let $G$ be a hamiltonian graph. If $G$ is not a cycle, then $G$ is not a  minimally 1-tough graph.
		\rm
	\end{lemma}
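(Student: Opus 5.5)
The statement to prove is Lemma \ref{hamiltonian}: a hamiltonian graph that is not a cycle cannot be minimally 1-tough. I will argue directly from the definition, using only the fact that a spanning cycle forces toughness at least 1.

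First I record that every hamiltonian graph $H$ satisfies $\tau(H)\geq 1$. If $H$ is complete this holds by convention, since $\tau(H)=+\infty$. Otherwise, let $C$ be a Hamilton cycle of $H$ and let $S$ be any vertex cut of $H$. Deleting the $|S|$ vertices of $S$ from $C$ splits $C$ into at most $|S|$ paths, because $S\neq\emptyset$. Every component of $H-S$ is a union of some of these paths, so
$$\omega(H-S)\leq \omega(C-S)\leq |S|.$$
Hence $H$ is 1-tough.

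Next, let $G$ be hamiltonian but not a cycle, and fix a Hamilton cycle $C$. Since $G\neq C$, there is an edge $e\in E(G)\setminus E(C)$. The graph $G-e$ still contains $C$, so it is hamiltonian, and the previous step gives $\tau(G-e)\geq 1$.

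Now suppose, for contradiction, that $G$ is minimally 1-tough. Then $\tau(G)=1$ and, by definition, $\tau(G-e)<1$ for this edge $e$. This contradicts $\tau(G-e)\geq 1$.

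There is one degenerate case. If $G-e$ happens to be complete, then $G$ would contain the complete graph $G-e$ plus an extra edge, which is impossible in a simple graph. So $\tau(G-e)\geq 1$ holds in all cases, and the contradiction shows that $G$ is not minimally 1-tough.

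The only step needing any care is the path-counting bound $\omega(C-S)\leq|S|$. It is standard: removing $k\geq 1$ vertices from a cycle leaves at most $k$ arcs.
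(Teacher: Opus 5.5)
Your proof is correct and takes the same route as the paper. The paper just notes that hamiltonian graphs are 1-tough and that deleting an edge not on the Hamilton cycle leaves the graph hamiltonian; you additionally spell out the routine count $\omega(G-S)\le\omega(C-S)\le|S|$ that the paper leaves implicit. Your separate ``degenerate case'' paragraph is unnecessary, since complete graphs are already covered by the convention $\tau=+\infty$ in your first step.
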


	\section{Proof of main result}

	Let $G$ be a graph.
	For an arbitrary edge $e$ with endpoints $x$ and $y$ in $G$, let  
	\begin{center}
			$g(xy)=|N_G(x) \cap N_G(y)|$ and $f(xy)=|N_G(x) \cup N_G(y)|$.
	\end{center}
Let  $S$ be a vertex set  which satisfies Lemma \ref{1-tough} for the edge $e$. Denote $C(e)$ as the component of $G-S$ which contains the edge $e$ and $D(e)$ as the union of the components of $G-S-C(e)$.
	In this section, we present the proof of Theorem \ref{Th1}, stating that  if  $G$ is a minimally 1-tough   $(K_1\cup P_4)$-free graph, then  $G$ is $C_4$, $C_5$ or $C_6$.

	\begin{proof} [\bf{Proof of Theorem \ref{Th1}.}]
		Let $e=uv$ be an edge of $G$
		such that
		
		(i) $g(uv)=$ max$\{g(xy):xy \in E(G)\}$;
		
		(ii) under the condition (i),
		$f(uv)=$ max$\{f(xy):xy \in E(G)\}$.
		
		\noindent	Let $S$ be a vertex subset of $G$ which satisfies Lemma \ref{1-tough} for the edge $e$. Then   
		\begin{center}
			$\omega(G-S)=|S|$ and $\omega((G-e)-S)=|S|+1.$
		\end{center}
		Since  $e$ is a bridge of the component $C(e)$ in $G-S$, we have $N_G(u) \cap N_G(v) \subseteq S$. Thus, $g(uv) \leq |S|$.
		Let $G_u$ and $G_v$ be the two components of $C(e)-e$ which contain $u$ and $v$, respectively. Denote the vertex sets of $G_u$ and $G_v$ by $L_u$ and  $L_v$, respectively.
		More clearly, let
		$$L_u=\{u\}\cup L_1\cup\cdots\cup L_a,$$ $$L_v=\{v\}\cup R_1\cup\cdots\cup R_b,$$ where $$L_i=\{x | d_{G_u}(x,u)=i, x\in L_u\},~~0\leq i\leq a,$$
		$$R_j=\{y | d_{G_v}(y,v)=j,y\in L_v\},~~ 0\leq j\leq b.$$
		Here, the amount $a$ denotes the maximum distance between the vertex $u$ and any other vertex in $G_u$, and $b$ denotes the maximum distance between the vertex $v$ and any other vertex in $G_v$.
		Clearly, $L_0=\{u\}$, $R_0=\{v\}$.

		\begin{claim}
			\label{connected}
			\rm
			If $L_u-\{u\}\neq \emptyset$, then the induced subgraph $G[L_u-\{u\}]$ is connected, and if $L_v-\{v\}\neq \emptyset$, then  the induced subgraph  $G[L_v-\{v\}]$ is connected.
		\end{claim}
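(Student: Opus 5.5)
We argue by contradiction using the $(K_1\cup P_4)$-freeness together with the fact that $G_v$ (or $G_u$) supplies a vertex far from the other side. By symmetry it suffices to treat $L_u-\{u\}$. Suppose $L_u-\{u\}\neq\emptyset$ and $G[L_u-\{u\}]$ is disconnected. Since $G_u$ is connected and every vertex of $L_u-\{u\}$ is joined to $u$ by a path inside $G_u$, each component of $G[L_u-\{u\}]$ contains a neighbour of $u$, i.e. a vertex of $L_1$. So we can pick two vertices $x,x'\in L_1$ lying in different components of $G[L_u-\{u\}]$. In particular $xx'\notin E(G)$, and $x,u,x'$ induce a $P_3$.

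The plan is to extend this $P_3$ to an induced $P_4$ through $v$ and to find an isolated extra vertex. Since $x,x'\in L_u$ and $uv$ is a bridge of $C(e)$, neither $x$ nor $x'$ is adjacent to $v$. Hence $x\,u\,v$ is an induced path, and so is $x'\,u\,v$.

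\textbf{Case 1: $L_v-\{v\}\neq\emptyset$.} Take $y\in R_1$. Then $x\,u\,v\,y$ is an induced $P_4$, because there are no edges between $L_u$ and $L_v$ other than $uv$. The vertex $x'$ is nonadjacent to $v$ and $y$ (again by the bridge property) and to $x$ (different components). However, $x'$ is adjacent to $u$, so this does not yet give a $K_1\cup P_4$. Instead we use a vertex of $D(e)$. Since $\omega(G-S)=|S|\geq 1$ and $\omega((G-e)-S)=|S|+1$, once $|S|\geq 2$ there is another component, and any vertex $z\in D(e)$ is nonadjacent to all of $C(e)$. Then $z\cup\{x,u,v,y\}$ is an induced $K_1\cup P_4$, a contradiction. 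This shows that whenever $|S|\ge 2$ and $L_v-\{v\}\ne\emptyset$, the path $x\,u\,v\,y$ already gives a contradiction, so the disconnection hypothesis is not even needed there. The claim in this case then follows even without using $x'$.

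\textbf{Case 2: $L_v=\{v\}$.} Here $x\,u\,x'$ cannot be extended through $v$. We use the path $x\,u\,x'$ together with further structure. If some component of $G[L_u-\{u\}]$ has a vertex $w\in L_2$ adjacent to $x$, then $w\,x\,u\,x'$ is an induced $P_4$, and $v$ is nonadjacent to $w$, $x$ and $x'$ but adjacent to $u$, so $v$ is not an isolated extra vertex. So again we take $z\in D(e)$, which gives the induced $K_1\cup P_4$ $\{z\}\cup\{w,x,u,x'\}$. Otherwise every component is contained in $L_1$. Then, picking $x,x'$ in different components, $x\,u\,x'$ together with $v$ forms a claw-like structure, and we must rule it out using degrees. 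Lemma~\ref{two components} and the 1-toughness give $\deg v\geq 2$, so $v$ has a neighbour $s\in S$. We then aim to build a $P_4$ such as $s\,v\,u\,x$ (when $s\not\sim u,x$) with an isolated $z\in D(e)$ that is not adjacent to $s$. Choosing $z$ appropriately is possible because, by Lemma~\ref{two components}, $s$ meets at least two components, but we need one component it misses. This is the step I expect to be hardest: we need to handle the cases $s\sim u$ or $s\sim x$, presumably using the maximality of $g(uv)$ and $f(uv)$ in conditions (i) and (ii) to compare with an edge $ux$ or $us$.

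\textbf{The case $|S|=1$.} Then $\omega(G-S)=1$ and $S$ is a cut vertex situation with $\omega((G-e)-S)=2$. We treat this separately. Since $G$ is 2-connected (being 1-tough and non-complete), there is no $D(e)$, and we must find the isolated $K_1$ among vertices of $C(e)$ far from the $P_4$, for example a vertex of another component of $G[L_u-\{u\}]$.
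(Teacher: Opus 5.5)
Your proposal has a genuine gap. The cases you actually close are those with $|S|\ge 2$ where an induced $P_4$ already lies in $C(e)$, such as $x\,u\,v\,y$ or $w\,x\,u\,x'$ with $w\in L_2$. There the disconnectedness hypothesis is essentially unused: the first case just shows $a+b\le 1$ whenever $|S|\ge 2$. The remaining cases are left as a plan: $|S|=1$, and $|S|\ge 2$ with $L_v=\{v\}$ and $L_2=\emptyset$. That plan, ``presumably using the maximality of $g(uv)$ and $f(uv)$'', cannot succeed.

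A counterexample shows why. Take $G=K_{2,3}$ with parts $\{u,s\}$ and $\{x,x',v\}$, $e=uv$ and $S=\{s\}$.
\begin{itemize}
\item $\omega(G-S)=1=|S|$ and $\omega((G-e)-S)=2$.
\item $G$ is $(K_1\cup P_4)$-free, since it has five vertices and none of them is isolated.
\item $G$ is $2$-connected with minimum degree $2$, which are the only consequences of $1$-toughness you invoke.
\item $G$ is triangle-free, so $g\equiv 0$, and every edge has $f=5$; hence $uv$ satisfies (i) and (ii).
\item Yet $G[L_u-\{u\}]=G[\{x,x'\}]$ is disconnected.
\end{itemize}
So no argument built from forbidden induced subgraphs, degree or connectivity conditions, and the extremal choice of $e$ can prove the claim. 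The full strength of $\tau(G)=1$ must be used.

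The missing idea is a direct toughness count, which is the paper's proof and handles all cases at once. Suppose $G[L_u-\{u\}]$ had at least two components. Deleting $S\cup\{u\}$ leaves those components, plus the component $G_v$, plus the $|D(e)|=\omega(G-S)-1=|S|-1$ components of $D(e)$. Hence
\[
\omega(G-(S\cup\{u\}))\ \ge\ 2+1+(|S|-1)\ =\ |S|+2\ >\ |S\cup\{u\}|,
\]
contradicting $\tau(G)=1$. In $K_{2,3}$ this is exactly the cut $\{u,s\}$ leaving three components. No induced paths, no $(K_1\cup P_4)$-freeness and no Lemma~\ref{two components} are needed.

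A minor point: in the case $|S|=1$, $D(e)$ is empty because $\omega(G-S)=|S|=1$, not because $G$ is $2$-connected.
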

		\begin{proof}[\rm{Proof.}]
			Suppose to the contrary that $G[L_u-\{u\}]$ is not connected.
			It follows from  $L_u-\{u\}\neq \emptyset$ that
			$S\cup \{u\}$ is a vertex cut in $G$, and  $$\omega(G-(S\cup \{u\}))=\omega(G_u-\{u\})+1+|D(e)|\geq 2+1+|D(e)|=
			3+\omega(G-S)-1=|S|+2.$$
			However, since $\tau(G)=1$, by the definition of toughness, it follows that
			$$\omega(G-(S\cup \{u\}))\leq |S\cup \{u\}|=|S|+1.$$
			This leads to a contradiction. Thus, the subgraph $G[L_u-\{u\}]$ is  connected. Similarly, the subgraph $G[L_v-\{v\}]$ is also connected if $L_v-\{v\}\neq \emptyset$.
		\end{proof}
		We divide  the following discussion into two cases.

		\vskip3pt\noindent{\bf Case 1.} $S$ is not a vertex cut of $G$.
		
		In this case, $|S|=\omega(G-S)=1$. Then $g(uv)=0$ or 1.
		Let $S=\{s\}$.
		Since $G$ is $(K_1\cup P_4)$-free, the length of each induced
		path in $G$ is at most 4.
		Note that $e$ is a bridge of $C(e)$. By the definition of $L_i$ and $R_j$,
		there exists an induced path with one end in
		$L_a$ and another end in $R_b$ in the component $C(e)$. Therefore,  $a+b+1\leq 4$, and so $a+b\leq 3$.

		If $g(uv)=1$, then $us,vs\in E(G)$, and so $uvsu$ is a triangle of $G$. By Lemma \ref{triangle}, we can obtain    $L_1\neq \emptyset$ and $R_1\neq \emptyset$. Since $g(uv)=1$, $s$ is neither adjacent to $L_1$ nor to $R_1$.
		Because $u$ and $v$ are not cut vertices of $G$, so $s$ is adjacent to both $L_u-(\{u\}\cup L_1)$ and $L_v-(\{v\}\cup R_1)$.
		Thus,
		$L_2\neq \emptyset$ and $R_2\neq \emptyset$. Then $a+b\geq 4$, contradicting   that $a+b\leq 3$.
		
		In the following discussion, assume that $g(uv)=0$.
		We first characterize the structures of $G_u$ and $G_v$ in the claims below.

		\begin{claim}
			\label{independent}
			\rm
			If $R_j\neq \emptyset$, then $R_j$ is an independent set of $G$, where $1\leq j\leq b\leq 3$, and  if $L_i\neq \emptyset$, then $L_i$ is an independent set of $G$, where $1\leq i\leq a\leq 3$.
		\end{claim}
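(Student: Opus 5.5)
The plan is to use the extremal choice of $e$ together with the $(K_1\cup P_4)$-freeness, and to treat $R_j$ in detail; the $L_i$ are handled symmetrically, with the roles of $u$ and $v$ exchanged. The first step is that $G$ is triangle-free. By condition (i), $g(uv)$ is maximum over all edges, and we are in the subcase $g(uv)=0$. Hence $g(xy)=0$ for every edge $xy$, so no two adjacent vertices have a common neighbour. The bound $b\le 3$ in the statement comes from $a+b\le 3$, which was shown earlier in Case 1.

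Suppose, for a contradiction, that $y,y'\in R_j$ are adjacent. If $j=1$, then $y$ and $y'$ are both adjacent to $v$, so $vyy'$ is a triangle, which is impossible. The remaining values are $j=2$ and $j=3$. Throughout we use one observation. Since $e$ is a bridge of $C(e)$ and $u\in L_u$, the only neighbour of $u$ in $L_v$ is $v$. So $u$ is nonadjacent to every vertex of $R_1\cup\cdots\cup R_b$, and we will use $u$ as the isolated $K_1$.

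\emph{Case $j=2$.} Choose $w\in R_1$ adjacent to $y$ and $w'\in R_1$ adjacent to $y'$. We check that $w y y' w'$ is an induced $P_4$.
\begin{itemize}
\item $w\ne w'$, since otherwise $wyy'$ is a triangle.
\item $wy'\notin E(G)$ and $w'y\notin E(G)$, since otherwise $wyy'$ or $w'yy'$ is a triangle.
\item $ww'\notin E(G)$, since otherwise $vww'$ is a triangle.
\end{itemize}
None of $w,y,y',w'$ is adjacent to $u$, so $u$ together with this path induces $K_1\cup P_4$, a contradiction.

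\emph{Case $j=3$.} Choose $w\in R_2$ adjacent to $y$ and $x\in R_1$ adjacent to $w$. We check that $x w y y'$ is an induced $P_4$.
\begin{itemize}
\item $xy\notin E(G)$ and $xy'\notin E(G)$, because vertices in layers $R_1$ and $R_3$ cannot be adjacent by the definition of the distance layers.
\item $wy'\notin E(G)$, since otherwise $wyy'$ is a triangle.
\end{itemize}
Again $u$ is adjacent to none of $x,w,y,y'$, which gives an induced $K_1\cup P_4$, a contradiction.

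Therefore each nonempty $R_j$ is independent, and the same argument applied in $G_u$, with $v$ as the isolated vertex, shows each nonempty $L_i$ is independent.

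The main obstacle is the case $j=2$. There the natural path $x w y y'$ would have $x=v$, and $v$ is adjacent to $u$, so $u$ is no longer isolated from it. This is why the path must be chosen inside $R_1\cup R_2$, and that choice needs the extra nonadjacency $ww'\notin E(G)$, which again comes from triangle-freeness.
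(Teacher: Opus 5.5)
Your proof is correct and is essentially the paper's argument: triangle-freeness comes from $g(uv)=0$ together with condition (i), and for $j=2$ you use exactly the same induced $K_1\cup P_4$ on $\{u,w,y,y',w'\}$. The only variation is at $j=3$, where the paper just says ``by similar discussion'' (implicitly taking two $R_2$-neighbours, which requires $R_2$ to be independent); your path $xwyy'$ through $R_1$ and $R_2$ works directly and does not need that.
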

		\begin{proof}[\rm{Proof.}]
			Since $g(uv)=0$, i.e., $G$ is triangle-free,
			it is obvious that $R_1$ is an independent set.
			We now show that $R_2$ is an independent set if $R_2\neq \emptyset$.
			Suppose to the contrary that    $R_2$ is not an independent set.
		Without loss of generality, let $y_1$ and $y_2$ be two adjacent  vertices in $R_2$. Clearly, $N_G(y_1)\cap N_G(y_2)=\emptyset$. Let $y_1'$ and $y_2'$ be the neighbors of $y_1$ and $y_2$ in $R_1$, respectively.
			Since $R_1$ is an independent set, $y_1'y_2'\notin E(G)$. Then the subgraph induced by $\{u,y_1',y_1,y_2,y_2'\}$ is a  $K_1\cup P_4$ of  $G$, a contradiction.
			By 	similar discussion, we can obtain   $R_3$ is also an independent set if it exists.  Therefore,  $R_j$ is an independent set if
			$R_j\neq \emptyset$ for $1\leq j\leq 3$. Similarly, $L_i$ is also an independent set if $L_i\neq \emptyset$ for $1\leq i\leq 3$.
		\end{proof}

		\begin{claim}
			\label{neighbirs}
			\rm
			If $R_2\neq \emptyset$, then each vertex of $R_1$ has neighbors in $R_2$, and if $L_2\neq \emptyset$, then each vertex of $L_1$ has neighbors in $L_2$.
		\end{claim}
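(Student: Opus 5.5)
We argue by contradiction, using the $(K_1\cup P_4)$-freeness together with the choice of $e=uv$. By symmetry it suffices to treat $R_1$ and $R_2$. Suppose $R_2\neq\emptyset$ and some vertex $y\in R_1$ has no neighbor in $R_2$. Since $R_2\neq\emptyset$, there is a vertex $y'\in R_1$ with a neighbor $z\in R_2$, and $y'\neq y$.

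The natural forbidden configuration is $u\,v\,y'\,z$ together with $y$. The path $u v y' z$ is induced in $G$ for the following reasons:
\begin{itemize}
\item $u$ has no neighbor in $L_v$ other than $v$, since $e$ is a bridge of $C(e)$;
\item $uy'\notin E(G)$ and $vz\notin E(G)$, because $z\in R_2$.
\end{itemize}
The vertex $y$ is adjacent to $v$, so this exact set does not give a $K_1\cup P_4$.

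We therefore look for an induced $P_4$ avoiding $N_G[y]$, and take $y\,v\,y'\,z$ as the candidate $P_4$. For the isolated vertex we need some vertex $w$ adjacent to none of $y,v,y',z$. Since $y$ has no neighbor in $R_2$, we have $yz\notin E(G)$. Since $R_1$ is independent by Claim~\ref{independent}, $yy'\notin E(G)$. Hence $y v y' z$ is an induced $P_4$.

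We now choose $w$. A vertex of $L_u-\{u\}$ is nonadjacent to all of $L_v$, so if $L_1\neq\emptyset$ we take $w\in L_1$ and obtain a $K_1\cup P_4$, a contradiction. If instead $L_u=\{u\}$, we use the component structure of $G-S$:
\begin{itemize}
\item if $D(e)\neq\emptyset$, we take $w\in D(e)$;
\item in Case 1 with $|S|=1$ we have $\omega(G-S)=1$, so $D(e)=\emptyset$, and $w$ must come from $S=\{s\}$ or from $L_u$.
\end{itemize}

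The main obstacle is therefore the situation $L_u=\{u\}$, where $u$ is adjacent to $v$ and possibly to $s$. Then $d_G(u)\le 2$, and since $G$ is $2$-connected (being 1-tough and non-complete), we get $N_G(u)=\{v,s\}$. In this situation we switch the roles and aim for the induced path $u\,v\,y'\,z$ with isolated vertex $y$; this fails only because $yv\in E(G)$. As an alternative, we use $s$:
\begin{itemize}
\item If $s$ is adjacent to none of $y,v,y',z$, then $s$ together with $yvy'z$ is the forbidden graph. Here $sv\notin E(G)$ because $g(uv)=0$ and $us\in E(G)$.
\item Otherwise $s$ is adjacent to $y$, $y'$ or $z$. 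If $sy\in E(G)$, then by Claim~\ref{connected} and $a+b\le 3$ we can build an induced path such as $u\,s\,y\ldots$, and find a long induced path in $G$ (for instance $z\,y'\,v\,u\,s$ when $sz, sy'\notin E(G)$), contradicting that induced paths have at most four vertices. The remaining adjacencies of $s$ to $y'$ or $z$ are handled similarly, using triangle-freeness to rule out $s$ being adjacent to both ends of an edge.
\end{itemize}

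I expect this last case analysis on the neighbors of $s$ to be the hardest part. I would first check whether $d_G(y)\ge 2$ forces $y$ to have a neighbor outside $\{v\}$. That neighbor can only be $s$, so $sy\in E(G)$, and this should quickly produce the induced $P_5$ $z\,y'\,v\,y\,s$ or a $K_1\cup P_4$.
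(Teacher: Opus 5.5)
Your argument is correct when $L_1\neq\emptyset$. A vertex $w\in L_1$ has no neighbour in $L_v$, so $\{w,y,v,y',z\}$ induces $K_1\cup P_4$. The gap is the subcase $L_u=\{u\}$. There your intended contradiction relies on the claim that induced paths have at most four vertices, and that claim is false. $(K_1\cup P_4)$-freeness only forbids induced paths on six or more vertices: $P_5$ itself is $(K_1\cup P_4)$-free, and $C_6$, one of the graphs in the theorem, contains induced $P_5$'s. The paper's ``length at most 4'' means at most five vertices. So the path $z\,y'\,v\,u\,s$ yields nothing, and the cases $sy'\in E(G)$ or $sz\in E(G)$ are only said to be ``handled similarly''.

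In fact, no forbidden-subgraph argument can close this subcase. Take vertices $u,v,s,y,y',z$ with edges $uv,us,vy,sy,vy',y'z,zs$. This graph is triangle-free, 2-connected and $(K_1\cup P_4)$-free. With $e=uv$ and $S=\{s\}$ it satisfies $\omega(G-S)=1$, $\omega((G-e)-S)=2$, $L_u=\{u\}$, $R_1=\{y,y'\}$ independent and $R_2=\{z\}$. Yet $y$ has no neighbour in $R_2$. The only thing excluding it is 1-toughness, since $\omega(G-\{v,s\})=3$.

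The missing idea is to use toughness, which is exactly what Claim~\ref{connected} encodes. Your final remark already contains the key observation, $N_G(y)\subseteq\{v,s\}$. The contradiction comes from counting components, not from an induced path. Every neighbour of $y$ in $G_v$ lies in $R_0\cup R_1\cup R_2$, $R_1$ is independent, and $y$ has no neighbour in $R_2$. Hence $y$ is an isolated vertex of $G[L_v-\{v\}]$, which also contains $z$, contradicting Claim~\ref{connected}. This is the paper's short proof. It needs no split on whether $L_1$ is empty and no $(K_1\cup P_4)$ configuration beyond the independence of $R_1$ from Claim~\ref{independent}.
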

		
		\begin{proof}[\rm{Proof.}]
			If $|R_1|=1$, then the conclusion holds obviously. Suppose that $|R_1|\geq 2$ and to the contrary that there exists a vertex $y\in R_1$ such that $N_{R_2}(y)=\emptyset$. By Claim \ref{independent}, $R_1$ is an independent set.
			Then $y$ is not adjacent to any vertex of $R_1-\{y\}$. Thus, the subgraph $G[L_v-\{v\}]$ is not connected, which contradicts Claim \ref{connected}.
			 Therefore, if $R_2\neq \emptyset$, then each vertex of $R_1$ has neighbors in $R_2$.  By similar discussion, if $L_2\neq \emptyset$, then each vertex of $L_1$ has neighbors in $L_2$.
		\end{proof}

		\begin{claim}
			\label{complete bipartite}
			\rm
			The induced subgraphs $G[R_j\cup R_{j+1}]$ and $G[L_i\cup L_{i+1}]$ are  complete bipartite graphs, where $j=1,2$ and $i=1,2$.

		\end{claim}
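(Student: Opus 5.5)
We argue by contradiction. Suppose $y\in R_j$ and $z\in R_{j+1}$ are nonadjacent, for some $j\in\{1,2\}$ with $R_{j+1}\neq\emptyset$. We want to build an induced $K_1\cup P_4$. The facts we will use are:
\begin{itemize}
\item the layers $R_j$ are independent (Claim \ref{independent});
\item $G$ is triangle-free;
\item $a+b\le 3$;
\item a vertex in layer $R_k$ has neighbors only in $R_{k-1}\cup R_k\cup R_{k+1}\cup S$;
\item vertices of $L_u$ and $L_v$ are adjacent only through the edge $uv$.
\end{itemize}
The case of the layers $L_i$ is symmetric.

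Since $z\in R_{j+1}$, it has a neighbor $y'\in R_j$, and $y'\neq y$. We first look for a common neighbor $x\in R_{j-1}$ of $y$ and $y'$.

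\textbf{Common neighbor exists.} Then $y\,x\,y'\,z$ is an induced $P_4$:
\begin{itemize}
\item $yy'\notin E(G)$ because $R_j$ is independent;
\item $yz\notin E(G)$ by assumption;
\item $xz\notin E(G)$ because $x$ and $z$ lie in layers at distance $2$.
\end{itemize}
It remains to find a vertex adjacent to none of $y,x,y',z$.
\begin{itemize}
\item For $j=2$ we have $b=3$, so $a=0$. The vertex $u$ works, since $u$ is adjacent to no vertex of $L_v-\{v\}$.
\item For $j=1$ we have $x=v$. Any vertex of $L_1$ works if $L_1\neq\emptyset$, since $L_1$ vertices see only $L_u\cup S$.
\end{itemize}

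\textbf{No common neighbor in $R_{j-1}$.} Pick a neighbor $x'\in R_{j-1}$ of $y'$ with $yx'\notin E(G)$. Then $z\,y'\,x'\,(\text{next vertex toward }v)$ is an induced $P_4$ going down the layers, and $y$ is adjacent to none of its vertices.
\begin{itemize}
\item When $j=2$, use the path $z\,y'\,x'\,v$. The vertex $y\in R_2$ is nonadjacent to $v$, to $y'$ (independence), to $z$, and to $x'$ by the choice of $x'$.
\item When $j=1$, every vertex of $R_1$ is adjacent to $v$, so $y$ and $y'$ always have the common neighbor $v$. This subcase therefore does not occur.
\end{itemize}

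The main obstacle is the subcase $j=1$ with $L_1=\emptyset$. Then $L_u=\{u\}$, and the only candidates for the isolated vertex are $u$ (adjacent to $v$) and $s$. Since $\deg(u)\ge2$ and $g(uv)=0$, we get $us\in E(G)$, and $s$ is the only other neighbor of $u$. My plan here has three steps.
\begin{itemize}
\item Use Claim \ref{neighbirs} to pick a neighbor $w\in R_2$ of $y$, with $w\neq z$, and consider the alternative $P_4$s $u\,v\,y\,w$ and $u\,v\,y'\,z$.
\item For $u\,v\,y'\,z$, the vertex $y$ is adjacent to $v$, so look instead for a vertex of $R_2\cup R_3$ adjacent to none of $u,v,y',z$. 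The candidate is $w$, if $wy'\notin E(G)$.
\item If every neighbor of $y$ in $R_2$ is also adjacent to $y'$, and symmetric conditions hold, exploit the maximality conditions (i)--(ii) on $f(uv)$. Since $f(yv)\ge f(uv)$-type comparisons become available, this should force $z$ to be adjacent to $y$, or else yield an induced $K_1\cup P_4$ using $s$. If $s$ is nonadjacent to the relevant path, $s$ serves as the isolated vertex.
\end{itemize}
This last bookkeeping is where I expect the real work, and it may require using the $f$-maximality of $uv$ in an essential way.
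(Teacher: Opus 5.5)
\textbf{There is a genuine gap.} The subcase you flag yourself, $j=1$ with $L_1=\emptyset$, is left unproved. What you give there is only a plan (``should force'', ``may require using the $f$-maximality''), so the claim is not established for $j=1$, and your $j=2$ argument does not cover it either. No maximality argument is needed. You already wrote down the path $u\,v\,y\,w$ with $w\in N_{R_2}(y)$, which exists by Claim~\ref{neighbirs}. You then looked for the isolated vertex outside the configuration, when the non-neighbor $z$ itself works:
\begin{itemize}
\item $zu\notin E(G)$, because the only edge between $L_u$ and $L_v$ is $uv$;
\item $zv\notin E(G)$, because $z\in R_2$;
\item $zy\notin E(G)$, by assumption;
\item $zw\notin E(G)$, because $R_2$ is independent (Claim~\ref{independent}).
\end{itemize}
The path $u\,v\,y\,w$ is induced because $uy,uw,vw\notin E(G)$. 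Hence $\{z,u,v,y,w\}$ induces $K_1\cup P_4$. This uses no condition on $L_1$, no bound $a+b\le 3$, and nothing about $f$.

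This is exactly the paper's argument for $j=1$. It also makes your case split unnecessary for $j=2$. The paper takes any $x\in R_1$, which is adjacent to $y\in R_2$ by the $j=1$ case, and uses the induced path $u\,v\,x\,y$ with $z\in R_3$ as the isolated vertex. Your common-neighbor and no-common-neighbor analysis for $j=2$, and your $j=1$ argument when $L_1\neq\emptyset$, are correct. They are more fragile, though, because they need an isolated vertex from outside ($u$ via $a=0$, or a vertex of $L_1$). Anchoring the $P_4$ at $u$ across the bridge $uv$, and using the witness of non-adjacency as the isolated vertex, handles every case uniformly.
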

		\begin{proof}[\rm{Proof.}]
			By Claim \ref{independent}, $R_j$ and $R_{j+1}$ are independent sets if $R_j\neq \emptyset$ and $R_{j+1}\neq \emptyset$.
			Clearly, if $R_{j+1}=\emptyset$, then $G[R_j\cup R_{j+1}]$ is a complete bipartite graph.
			If $R_{j+1}\neq \emptyset$, then we only need to show that each vertex of $R_j$ is adjacent to all the vertices of $R_{j+1}$.
			For  $j=1$, suppose to the contrary that there exists a vertex $y_1\in R_1$ and $y_2\in R_2$ such that $y_1y_2\notin E(G)$. By Claim \ref{neighbirs}, let $y_1'$ be a neighbor of $y_1$ in $R_2$.
			Then the subgraph induced by $\{y_2,u,v,y_1,y_1'\}$ is a
			$K_1\cup P_4$ of  $G$, a contradiction.  Therefore, the subgraph $G[R_1\cup R_2]$ is a complete bipartite graph.
			For $j=2$,
			if  there exists a vertex $y\in R_2$ and $y'\in R_3$ such that $yy'\notin E(G)$.
			For any vertex $y''\in R_1$, 
			the subgraph induced by $\{y',u,v,y'',y\}$ 
			is a  $K_1\cup P_4$ of  $G$, a contradiction. Therefore,  the subgraph $G[R_2\cup R_3]$ is a complete bipartite graph if $R_3\neq \emptyset$. Thus, the induced subgraph $G[R_j\cup R_{j+1}]$ is a complete bipartite graph if  $R_{j+1}\neq \emptyset$ for each $j=1,2$. By similar discussion, we can obtain  $G[L_i\cup L_{i+1}]$ is a complete bipartite graph if  $L_{i+1}\neq \emptyset$ for each $i=1,2$.
		\end{proof}

		\begin{claim}
			\rm
			\label{path}
			If $vs\notin E(G)$, then the induced subgraph $G[L_v]$ is a path and $s$ is adjacent to $R_b$, and if $us\notin E(G)$, then the induced subgraph $G[L_u]$ is a path and $s$ is adjacent to $L_a$.
		\end{claim}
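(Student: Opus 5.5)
The plan is to prove the statement for $L_v$; the one for $L_u$ is symmetric. Throughout we are in Case~1 with $g(uv)=0$, $S=\{s\}$, $G-s$ connected and $G$ triangle-free. By Claims \ref{independent}–\ref{complete bipartite}, $G[L_v]$ is built from independent layers $\{v\}=R_0,R_1,\dots,R_b$ with $b\le 3$, consecutive layers forming complete bipartite graphs and no edges between non-consecutive layers. So $G[L_v]$ is a path exactly when $|R_j|=1$ for every $1\le j\le b$. Assume $vs\notin E(G)$.

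\textbf{Non-triviality of $L_v$.} Since $G$ is non-complete and $1$-tough, it is $2$-connected, so $\delta(G)\ge 2$. As $vs\notin E(G)$, the vertex $v$ has a neighbour besides $u$, and it lies in $L_v$; hence $R_1\neq\emptyset$. Since $v$ is not a cut vertex of $G$ and $L_v-\{v\}$ is separated from $L_u$ in $G-v-s$, the vertex $s$ must have a neighbour in $L_v-\{v\}$.

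\textbf{Main step: $|R_j|=1$ for all $j$.} Suppose instead that $|R_j|\ge 2$ for some $j\ge 1$, and pick $y_1,y_2\in R_j$. By the complete bipartite structure, $N(y_1)\cap L_v=N(y_2)\cap L_v=R_{j-1}\cup R_{j+1}$. Take any $w\in R_{j-1}$ (when $j=1$, $w=v$) and consider the edge $e'=y_1w$. By Lemma \ref{1-tough} there is a set $S'$ with $\omega(G-S')=|S'|$ such that $e'$ is a bridge of a component of $G-S'$. The path $y_1\,z\,y_2\,w$ avoids $e'$ whenever $z\in R_{j+1}$ or $z=s$ with $y_1s,y_2s\in E(G)$, and it can also be closed through further vertices of $R_{j-1}$. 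Hence $S'$ must contain $y_2$, or else all neighbours of $y_1$ other than $w$.

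I would then combine this with Lemma \ref{two components} (when $|S'|\ge2$) and the bound $a+b\le 3$ to produce either a set violating $1$-toughness or an induced $K_1\cup P_4$. A candidate for the latter is $y_2$ together with an induced path $P_4$ through $u$, $L_u$ and $s$ avoiding $N(y_2)$. This counting and case check, split according to whether $s$ is adjacent to $y_1$ or $y_2$ and whether $j=b$, is the step I expect to be the main obstacle. If it stalls, the first thing I would try is a direct toughness count: delete $R_{j-1}\cup R_{j+1}\cup\{s\}$ and compare the number of components with the size of the deleted set.

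\textbf{$s$ is adjacent to $R_b$.} Once $G[L_v]$ is a path $v=r_0r_1\cdots r_b$, let $j$ be the largest index with $sr_j\in E(G)$; such a $j\ge 1$ exists by the first step. If $j<b$, then removing $r_j$ separates $\{r_{j+1},\dots,r_b\}$ from the rest of $G$. Indeed, these vertices have no neighbours in $L_u$ (there are no edges between $L_u$ and $L_v$ other than $uv$) and none adjacent to $s$. So $r_j$ would be a cut vertex, contradicting $2$-connectivity. Hence $j=b$, i.e.\ $s$ is adjacent to $R_b$. The case $us\notin E(G)$ follows by interchanging the roles of $u$ and $v$.
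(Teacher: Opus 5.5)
\textbf{The main step is not proved.} Your reduction is correct: the layers are independent, consecutive layers are complete bipartite, so $G[L_v]$ is a path iff $|R_j|=1$ for all $j$. The non-triviality step is also correct. Your last step is a neat way to finish once the path structure is known: if the largest $j$ with $sr_j\in E(G)$ were smaller than $b$, then $r_j$ would be a cut vertex.

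The gap is the step $|R_j|=1$ itself. You list tools and call the case check the obstacle, but you do not carry it out. The contradictions you propose, a violated toughness count or an induced $K_1\cup P_4$, cannot do it alone. Take the graph on $s,u,v,w_1,w_2,y_1,y_2$ with edges $su$, $uv$, $vw_i$, $w_iy_k$, $y_ks$ ($i,k\in\{1,2\}$).
\begin{itemize}
\item It has the Hamilton cycle $uvw_1y_1w_2y_2su$, so it is $1$-tough.
\item It is triangle-free and $(K_1\cup P_4)$-free. An induced $P_4$ meets each twin pair $\{w_1,w_2\}$, $\{y_1,y_2\}$ at most once, so it runs through four consecutive classes of the $5$-cycle of classes, and every remaining vertex has a neighbour on it.
\item $uv$ is a bridge of the connected graph $G-s$, and $vs\notin E(G)$.
\end{itemize}
Yet $R_1=\{w_1,w_2\}$ and $R_2=\{y_1,y_2\}$. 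What excludes this graph is minimality: Lemma~\ref{1-tough} fails for $w_1y_1$, since $G-w_1y_1$ is still hamiltonian. So the proof must apply Lemma~\ref{1-tough} to a well-chosen edge other than $uv$.

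\textbf{The missing idea is how to get a contradiction from that edge.} The paper takes an edge $e'=xy$ between consecutive layers ($R_0=\{v\}$ allowed). When $s$ sees only part of a layer, the endpoint in that layer is chosen adjacent to $s$. It pairs $e'$ with a $4$-cycle through $e'$ whose other two vertices are dominated, i.e.\ have neighbourhood inside $N(x)\cup N(y)$. These are twins such as $w_2,y_2$, or a vertex $z_1\in R_3$ not adjacent to $s$, for which $N(z_1)=R_2$.
\begin{itemize}
\item One of the two dominated vertices must lie in the set $S'$ given by Lemma~\ref{1-tough} for $e'$.
\item A second cycle through $e'$, built from $s$ and a $u$--$s$ path in $G[L_u\cup\{s\}]$, avoids both, so $|S'|\ge 2$.
\item A dominated vertex of $S'$ has all its neighbours in $C(e')\cup S'$, contradicting Lemma~\ref{two components}.
\end{itemize}
Your cycle $y_1zy_2w$ with $z\in R_{j+1}$ does not work in general: $z$ may have neighbours in $R_{j+2}$ outside $N(y_1)\cup N(w)$. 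This is why the edge must be chosen according to $b$ and to where $s$ attaches.

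Toughness counts enter only afterwards, when a singleton layer sits next to a larger one. Examples are $\omega(G-(\{s\}\cup R_1))\ge 3$ when $b=2$ and $|R_1|=1<|R_2|$, or $\omega(G-(\{v\}\cup R_2))=|R_1|+1$. This case analysis over $b\in\{1,2,3\}$ and the position of $N(s)$ is the real content of the claim, and it is missing from your proposal.
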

		
		\begin{proof}[\rm{Proof.}]
			Since $vs\notin E(G)$ and $e$ is not a bridge of $G$, we have  $R_1\neq\emptyset$ and $s$ is adjacent to $L_v-\{v\}$.
			Since $a+b\leq 3$, we only need to discuss the cases that $b=1,2$ or 3.

			If  $b=1$, then $L_v=\{v\}\cup R_1$ and $s$ is adjacent to $R_1$.
			By Claim \ref{independent}, $R_1$ is an independent set.
			Applying  Claim \ref{connected}, $G[R_1]$ is a connected subgraph. Thus, $|R_1|=1$. Therefore, in this case, $G[L_v]$ is a path of length 1 and $s$ is adjacent to $R_1$.

			Since $e$ is not a bridge of $G$, $s$ is adjacent to at least one vertex of $L_u$.
			Then $G[L_u\cup \{s\}]$ is connected. Let $P$ be a path from
			$s$   to $u$  in $G[L_u\cup \{s\}]$.
			In the following discussion, for convenience, denote the vertex sets $R_1=\{w_1,w_2,\dots, w_p\}$, $R_2=\{y_1,y_2,\dots,y_q\}$ and $R_3=\{z_1,z_2,\dots,z_l\}$.

			If $b=2$, then $L_v=\{v\}\cup R_1\cup R_2$.
			By Claim \ref{complete bipartite},
		 	$G[R_1\cup R_2]$ is a complete bipartite graph. Then
			$s$ is adjacent to exactly one of $R_1$ and $R_2$ since  $g(uv)=0$.
			If  $s$ is  adjacent to $R_1$, then $|R_1|\geq 2$, otherwise, the unique vertex of $R_1$ is a cut vertex of $G$, which contradicts  $\tau(G)=1$.
			Without loss of generality,
			assume that $w_1s\in E(G)$.
			Take $e_1=vw_1$. Let $S_1$ be  a vertex set satisfying Lemma  \ref{1-tough} for  $e_1$.
			Since $vw_1y_1w_2v$ is a cycle of $G$ containing $e_1$, we have   $w_2\in S_1$ or $y_1\in S_1$. Since
			$P\cup uvw_1s$ is another cycle which contains $e_1$ and does not contain $w_2$ and $y_1$, we can obtain  $|S_1|\geq 2$.
			Note that $N_G(w_2)\subseteq\{v,s\}\cup R_2=N_G(w_1)$ and $N_G(y_1)=R_1\subseteq N_G(v)$.
			Thus,
			if $w_2\in S_1$ (or $y_1\in S_1$), then $w_2$ (or $y_1$) is adjacent to exactly one component of $G-S_1$, which contradicts Lemma \ref{two components}.
			Thus, $s$ is adjacent  to $R_2$. Without loss of generality, we may assume that $y_1s\in E(G)$.
			Suppose first that $|R_2|\geq 2$. Then $|R_1|\geq 2$, otherwise, $\omega(G-(\{s\}\cup R_1))\geq 3$, which contradicts   $\tau(G)=1$.
			Take $e_2=w_1y_1$.  Let $S_2$ be a vertex set which  satisfies Lemma  \ref{1-tough} for  $e_2$.
			Since $w_1y_1w_2y_2w_1$ is a cycle of $G$ containing $e_2$, we have  $w_2\in S_2$ or $y_2\in S_2$.
			Moreover, $P\cup uvw_1y_1s$ is another cycle which contains $e_2$ and does not contain $w_2$ and $y_2$. Thus,    $|S_2|\geq 2$.
			Note that $N_G(w_2)=\{v\}\cup R_2=N_G(w_1)$ and $N_G(y_2)\subseteq \{s\}\cup R_1= N_G(y_1)$.
			Therefore,
			if $w_2\in S_2$ (or $y_2\in S_2$), then $w_2$ (or $y_2$) is adjacent to exactly one component of $G-S_2$, which contradicts Lemma \ref{two components}.
			Thus, $|R_2|=1$. Since $\omega(G-(\{v\}\cup R_2))=|R_1|+1$ and $\tau(G)=1$, we  have    $|R_1|=1$. Therefore, $G[L_v]$ is a path of length 2 and $s$ is adjacent to $R_2$.
			
			We finally consider the case $b=3$ and $L_v=\{v\}\cup R_1\cup R_2\cup R_3$.
			Suppose that $s$ is not adjacent to $R_3$. Then $s$ is adjacent to $R_1\cup R_2$.
			By Claim \ref{complete bipartite}, $G[R_1\cup R_2]$ is a complete bipartite graph. Thus, $s$ is adjacent to exactly one of $R_1$ and $R_2$ since $g(uv)=0$.
			Assume first that $s$ is adjacent   to $R_1$.
			Then $|R_1|\geq 2$ and $|R_2|\geq 2$, otherwise, the vertex of $R_1$ or $R_2$ is a cut vertex of $G$, which contradicts   $\tau(G)=1$. Without loss of generality, assume that $w_1s\in E(G)$.  Take $e_3=w_1y_1$.
			Then there exists a vertex set $S_3$ satisfying Lemma \ref{1-tough} for  $e_3$.
			By Claim \ref{complete bipartite},	 $w_1y_1w_2vw_1$ and  $w_1y_1z_1y_2w_1$ are two cycles of $G$ containing $e_3$. Then $|S_3|\geq 2$, and moreover,  $y_2\in S_3$ or $z_1\in S_3$.
			Since
			$N_G(y_2)=R_1\cup R_3=N_G(y_1)$ and $N_G(z_1)= R_2\subseteq N_G(w_1)$, if $y_2\in S_3$ (or $z_1\in S_3$), then $y_2$ (or $z_1$) is adjacent to exactly one component of $G-S_3$, which contradicts Lemma \ref{two components}.
			Suppose now that $s$ is adjacent  only to  $R_2$. Then $|R_2|\geq 2$, otherwise, the vertex of $R_2$ is a cut vertex of $G$, which contradicts  $\tau(G)=1$. Without loss of generality, assume that $y_1s\in E(G)$. Take $e_4=w_1y_1$, and let $S_4$ be  a vertex subset of $G$ satisfying Lemma \ref{1-tough}.
			Since $w_1y_1z_1y_2w_1$ is a cycle of $G$ containing $e_4$, we have   $y_2\in S_4$ or $z_1\in S_4$. Since  $P\cup uvw_1y_1s$ is another cycle which contains $e_4$ and does not contain $y_2$ and $z_1$,  it follows that $|S_4|\geq 2$.
			Note that
			$N_G(y_2)\subseteq \{s\}\cup R_1\cup R_3=N_G(y_1)$ and $N_G(z_1)= R_2\subseteq N_G(w_1)$.
			Therefore,
			if $y_2\in S_4$ (or $z_1\in S_4$), then $y_2$ (or $z_1$) is adjacent to exactly one component of $G-S_4$, which  contradicts   Lemma \ref{two components}.
			
			Therefore, we can obtain   $s$ must be adjacent to $R_3$.
			It follows from $g(uv)=0$ that $s$ is not adjacent to $R_2$.
			We assume first that $|R_3|\geq 2$. Then $|R_2|\geq 2$, otherwise, by Claim \ref{independent}, we have $\omega(G-(\{s\}\cup R_2))\geq 3$, contradicting that $\tau(G)=1$.
			Without loss of generality,
			assume that $z_1s\in E(G)$.
			Take $e_5=y_1z_1$, and let $S_5$ be a vertex subset  of $G$ satisfying
			Lemma \ref{1-tough} for  $e_5$.
			Since $y_1z_1y_2z_2y_1$  is a cycle of $G$ containing $e_5$, we have   $y_2\in S_5$ or $z_2\in S_5$.
			Since $P\cup uvw_1y_1z_1s$ is another cycle which contains $e_5$ and does not contain $y_2$ and $z_2$, it follows that
			$|S_5|\geq 2$.
			Note that $N_G(y_2)= R_1\cup R_3=N_G(y_1)$ and $N_G(z_2)\subseteq\{s\}\cup  R_2= N_G(z_1)$.
			If $y_2\in S_5$ (or $z_2\in S_5$), then $y_2$ (or $z_2$) is adjacent to exactly one component of $G-S_5$, which contradicts Lemma \ref{two components}. Thus, $|R_3|=1$. Suppose that $|R_2|\geq 2$. Then $|R_1|\geq 2$, otherwise, by Claim \ref{independent}, we have $\omega(G-(R_1\cup R_3))\geq 3$,  contradicting that $\tau(G)=1$.
			Take $e_6=w_iy_1$, here $w_i$ is a vertex of $R_1$ such that if $s$ is adjacent to $R_1$, then $w_i$ must be adjacent to $s$.
			Let $S_6$ be a vertex set which satisfies  Lemma \ref{1-tough} for $e_6$.
			Since  $w_iy_1w_jy_2w_i$$(i\neq j)$ is a cycle of $G$ containing $e_6$, we have $w_j\in S_6$ or $y_2\in S_6$.
			Since $P\cup uvw_iy_1z_1s$ is another cycle which contains $e_6$ and does not contain $w_j$ and $y_2$, it follows that $|S_6|\geq 2$.
			If $w_is\in E(G)$, then  $N_G(w_j)\subseteq\{v,s\}\cup R_2=N_G(w_i)$, and if $w_is\notin E(G)$, then  $N_G(w_j)=\{v\}\cup R_2=N_G(w_i)$.
			In either case, $N_G(w_j)\subseteq N_G(w_i)$.
			Thus,
			$w_j$ is adjacent to exactly one component of $G-S_6$ if $w_j\in S_6$, which contradicts Lemma \ref{two components}.
			Therefore, $w_j\notin S_6$. Then   $y_2\in S_6$.
			Since  $N_G(y_2)=  R_1\cup R_3= N_G(y_1)$,  $y_2$ is also adjacent to exactly one component of $G-S_6$, which contradicts Lemma \ref{two components}.
			Thus,  $|R_2|=1$. Since $\omega(G-(\{v,s\}\cup R_2))=1+|R_1|+|R_3|=2+|R_1|$ and $\tau(G)=1$,  we have   $|R_1|=1$. Hence, $G[L_v]$ is a path of length 3 and $s$ is adjacent to $R_3$. Here, $s$ may be adjacent to $R_1$.
			
			Concluding the above discussion, we know that if $vs\notin E(G)$, then $G[L_v]$ is a path and $s$ is adjacent to $R_b$.
			By similar discussion, if $us\notin E(G)$, then $G[L_u]$ is a path and $s$ is adjacent to $L_a$.
		\end{proof}
		
		Since $g(uv)=0$, $s$ is adjacent to at most one
		of $u$ and $v$. Without loss of generality, we assume that $vs\notin E(G)$. Then $b\geq 1$, and thus $a\leq 2$.  By Claim \ref{path},  $G[L_v]$ is a path and $s$ is adjacent to  $R_b$.
		
		If $us\notin E(G)$, then $a\geq 1$, and thus $a+b\geq 2$. Using Claim \ref{path} again, $G[L_u]$ is also a path and
		$s$ is  adjacent to $L_a$.
		Recall that $a+b\leq 3$, then the component  $C(e)$ is a path of length 3 or 4. Since $s$ is adjacent to both $L_a$ and $R_b$,
		we can obtain   $G$ is a  hamiltonian graph.
		By Lemma \ref{hamiltonian}, $G$ is a cycle, and thus $G$ is $C_5$ or $C_6$ in this case.
		
		Suppose in the following discuss that $us\in E(G)$.
		We divide this case into three subcases according to the value of $a$.

		If $a=0$, then	by Claim \ref{path},
		$C(e)$ is a path of length 2, 3 or 4 and $s$ is adjacent to $R_b$. Then $G$ is  a hamiltonian graph, and so    $G$ is $C_4$, $C_5$ or $C_6$ by Lemma \ref{hamiltonian}.

		If $a=1$, then
		since $g(uv)=0$, it follows that $s$ is not adjacent to $L_1$. Hence, $u$ is a cut vertex of $G$, which contradicts   $\tau(G)=1$.

		If $a=2$, then $b=1$. By Claim \ref{path},  $|R_1|=1$.
		Denote  the vertex of $R_1$ by $y_1$. For the edge $e_7=us$, let $S_7$ be a
		vertex subset of $G$ satisfying Lemma \ref{1-tough}.
		By Claim \ref{path},
		$usy_1vu$ is a cycle of $G$ containing $e_7$. Therefore,  $v\in S_7$ or $y_1\in S_7$.
		Since $u$ is not a cut vertex and $g(uv)=0$, $s$ must be adjacent to
		a vertex of $L_2$, say $x_i$. Using Claim \ref{connected},
		we can obtain the subgraph $G[L_u]$ is connected, then the path $usx_i$ together with a path  from $x_i$ to $u$ in $G[L_u]$ consist of anther cycle which contains $e_7$ and does not contain $v$ and $y_1$.
		Thus, $|S_7|\geq 2$.
		Since $N_G(v)=\{u,y_1\}\subseteq N_G(u)\cup N_G(s)$ and $N_G(y_1)=\{v,s\}\subseteq N_G(u)\cup N_G(s)$, if $v\in S_7$ (or $y_1\in S_7$), then $v$ (or $y_1$) is adjacent to exactly one component of $G-S_7$, which contradicts Lemma \ref{two components}.
		
		Therefore, we conclude that if $S$ is not a vertex cut of $G$, then $G$ is $C_4$, $C_5$ or $C_6$.

		\vskip3pt\noindent{\bf Case 2.} $S$ is a  vertex cut of $G$.
		
		In this case, $|S|=\omega(G-S)\geq 2$. Let $S=\{s_1,s_2,\dots,s_k\}$, where $k=|S|\geq 2$.
		Let $V(D(e))$ denote the vertex set of $D(e)$.
		Then any induced path in $C(e)$ is  of length at most 2 and $a+b\leq 1$. According to the symmetric property of $u$ and $v$, we only need to discuss the cases that $a=b=0$ and $a=0,b=1$, that is, $V(C(e))=\{u,v\}$ and $V(C(e))=\{u,v\}\cup R_1$.

		Let $N_S(C(e))$ denote the set of neighbors of all the vertices in $V(C(e))$ within $S$, and let $S'=S-N_S(C(e))$. Here, $S'$ may be an empty set.
		Let $\{A_1,A_2,A_3,A_4\}$ be a partition of $N_S(C(e))$ such that
		$$A_1=\{s\in N_S(C(e))| us \in E(G), vs\notin E(G)\},$$
		$$A_2=\{s\in N_S(C(e))| us \in E(G), vs\in E(G)\},$$
		$$A_3=\{s\in N_S(C(e))| us \notin E(G), vs\in E(G)\},$$
		$$A_4=\{s\in N_S(C(e))| us \notin E(G), vs\notin E(G)\}.$$
		According to the definition above, if $V(C(e))=\{u,v\}$, then $A_4=\emptyset$, and if $V(C(e))=\{u,v\}\cup R_1$ and $A_4\neq \emptyset$, then $A_4$ is only adjacent to $R_1$.
		
		\begin{claim}
			\label{A1A3}
			\rm
			$A_1\neq \emptyset$, and if $V(C(e))=\{u,v\}$, then $A_3\neq \emptyset$.
		\end{claim}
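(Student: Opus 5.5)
We need to show $A_1\neq\emptyset$, and $A_3\neq\emptyset$ when $V(C(e))=\{u,v\}$. In Case 2, $V(C(e))=\{u,v\}$ or $\{u,v\}\cup R_1$ (up to the symmetry of $u$ and $v$), so $L_u=\{u\}$, i.e.\ $a=0$. We argue both parts by contradiction, using that $\tau(G)=1$ forces $G$ to have no cut vertex, together with Lemma~\ref{triangle}.

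\textbf{$A_1\neq\emptyset$.} Suppose $A_1=\emptyset$. Then every neighbor of $u$ in $S$ lies in $A_2$, so $N_G(u)\subseteq\{v\}\cup A_2\subseteq N_G[v]$.

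First, $u$ has some neighbor $s\in S$. Otherwise $N_G(u)=\{v\}$, since $L_u=\{u\}$. Then $v$ is a cut vertex of $G$ separating $u$ from the rest; the rest is nonempty because $|S|\ge 2$. This contradicts $\tau(G)=1$.

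Hence such an $s$ lies in $A_2$, and $uvs$ is a triangle. By Lemma~\ref{triangle}, $d_G(u)\ge 3$, so $g(uv)=|A_2|\ge 2$. We also get $N_G(u)-\{v\}\subseteq N_G(v)$.

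To reach a contradiction, we use the maximality of $g(uv)$ and $f(uv)$, aiming to show that some edge $vs$ with $s\in A_2$ beats $uv$.
\begin{itemize}
\item Every vertex of $A_2-\{s\}$ is adjacent to both $v$ and $s$ whenever it is adjacent to $s$.
\item The vertex $s$ is adjacent to at least two components of $G-S$ by Lemma~\ref{two components}, so it has a neighbor $w\in D(e)$. This $w$ is not adjacent to $v$.
\item Comparing, $f(vs)\ge |N_G(v)\cup N_G(s)|$ picks up $w$, while $N_G(u)\cup N_G(v)=N_G[v]$. So $f(vs)>f(uv)$ as soon as $g(vs)\ge g(uv)$.
\end{itemize}
Note $g(vs)\ge |\{u\}\cup (N_G(s)\cap A_2)|$. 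Therefore the key estimate is to show $g(vs)\ge g(uv)$ for a suitable choice of $s$. If that fails, we have $g(vs)<g(uv)$ for every $s\in A_2$. In that situation we try a different contradiction: apply Lemma~\ref{1-tough} to the edge $uv$ and examine the vertex $u$, whose neighborhood lies inside $N_G[v]$. Alternatively, the set $(S-\{s\})\cup\{v\}$ type cut may produce too many components.

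I expect this comparison step to be the main obstacle. I would first try to show directly that $(N_G(v)-\{u\})\cup\{v\}$ makes $u$ irrelevant, i.e.\ to derive the contradiction from minimality by deleting an edge $us$ with $s\in A_2$. Concretely, I would show that $G-us$ is still $1$-tough, because any cut $T$ of $G-us$ separating $u$ from $s$ must contain $v$, and $v$ then dominates $u$'s role.

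\textbf{$A_3\neq\emptyset$ when $V(C(e))=\{u,v\}$.} This is the same argument with the roles of $u$ and $v$ swapped. The configuration is symmetric because then $L_v=\{v\}$ as well.
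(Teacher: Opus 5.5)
Your reduction is fine: with $L_u=\{u\}$, the assumption $A_1=\emptyset$ gives $N_G(u)\subseteq\{v\}\cup A_2\subseteq N_G[v]$ with $A_2\neq\emptyset$. But there is a genuine gap: the proposal stops exactly where the contradiction has to be produced, and none of the routes you list closes it.
\begin{itemize}
\item The extremal route does not work as stated. Condition (ii) compares $f$ only among edges of maximum $g$, so you would need $g(vs)=g(uv)=|A_2|$. Nothing in the setup forces this.
\item Applying Lemma~\ref{1-tough} to $uv$ cannot place $u$ in the separating set, since $u$ is an endpoint of that edge.
\item Your plan to show that $G-us$ is still $1$-tough uses only that $v$ dominates $u$, and this fails as a local argument. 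Take vertices $u,v,s,s',x$ with edges $uv,us,us',vs,vs',vx,s'x$. Then $N(u)\subseteq N[v]$ and $G$ is hamiltonian (cycle $usvxs'u$), hence $1$-tough. Yet in $G-us$ the set $T=\{v,s'\}$, which does contain $v$, leaves the three isolated vertices $u,s,x$.
\end{itemize}
So minimality has to be exploited at a different edge.

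The missing idea is to apply Lemma~\ref{1-tough} to the edge $vs$ with $s\in A_2$, and let $T$ be the resulting set. Since $uvs$ is a triangle, $u\in T$, whereas $v\notin T$ because $v$ is an endpoint of $vs$.
\begin{itemize}
\item If $|T|\ge 2$: every neighbor of $u$ in $G-T$ is $v$ or a neighbor of $v$. So $u$ meets only the component of $G-T$ containing $v$, contradicting Lemma~\ref{two components}.
\item If $T=\{u\}$: then $vs$ is a bridge of $G-u$. By Lemma~\ref{triangle}, $d_G(s)\ge 3$, so the side $L_s$ of $s$ contains a vertex other than $s$. Since $s$ is not a cut vertex of $G$, $u$ has a neighbor $u'\in L_s-\{s\}$. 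Because $vs$ is a bridge of $G-u$, $u'v\notin E(G)$, contradicting $N_G(u)\subseteq N_G[v]$.
\end{itemize}
With this step in place, your symmetry argument for $A_3\neq\emptyset$ when $V(C(e))=\{u,v\}$ goes through. In that case $A_4=\emptyset$, so $A_3=\emptyset$ gives $N_G(v)\subseteq N_G[u]$, and you use the edge $us$ with $s\in A_2$.
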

		\begin{proof}[\rm{Proof.}]
			Suppose to the contrary that $A_1=\emptyset$.
			Since $e$ is not a bridge of $G$, it follows that $A_2\neq \emptyset$.
			Then  $N_G(u)=\{v\}\cup A_2\subseteq N_G[v]$.
			Let $s_i$ be a vertex of $A_2$. Take $e_8=vs_i$, and let $S_8$ be a vertex set satisfying  Lemma \ref{1-tough} for $e_8$.
			Since $uvs_iu$ is a triangle of $G$,  we have $u\in S_8$.
			If $|S_8|=1$, then $S_8=\{u\}$ and $e_8$ is a bridge of $G-\{u\}$.
			Let $L_{s_i}$ denote the component of $(G-e_8)-\{u\}$ that contains $s_i$.
			Since $uvs_iu$ is a triangle, according to Lemma \ref{triangle}, we have
			$L_{s_i}-\{s_i\}\neq \emptyset$. Since $s_i$ is not a cut vertex of $G$, $u$ has neighbors in  $L_{s_i}-\{s_i\}$, say $u'$. It follows from $e_8$ is a bridge of $G-\{u\}$ that  $vu'\notin E(G)$, a contradiction with  $N_G(u)\subseteq N_G[v]$.
			If $|S_8|\geq 2$, then since $N_G(u)\subseteq N_G[v]$, $u$ is adjacent to exactly one component of $G-S_8$,  a contradiction obtained by Lemma \ref{two components}. Thus, $A_1\neq \emptyset$.
			If  $V(C(e))=\{u,v\}$, then $A_4=\emptyset$. By the symmetric property of $u$ and $v$, we conclude that $A_3\neq \emptyset$ by similar discussion above.
		\end{proof}

		\begin{claim}
			\label{all the vertices of D(e)}
			\rm
			If $|S|\geq 3$, then each vertex of $A_1\cup A_3$ is adjacent to all the vertices of $D(e)$.
		\end{claim}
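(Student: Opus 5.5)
The plan is to argue directly from the $(K_1\cup P_4)$-free hypothesis, using the structure already obtained in Case 2. By Claim \ref{A1A3} there is a vertex $s\in A_1$, so $us\in E(G)$ and $vs\notin E(G)$. (For $A_3$ the argument is symmetric with $u$ and $v$ exchanged. If $A_3$ is empty, the statement about $A_3$ is vacuous.)

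Fix $s\in A_1$ and suppose, for a contradiction, that some vertex $x$ of $D(e)$ is not adjacent to $s$. The path $s\,u\,v$ is induced, since $vs\notin E(G)$. The first step is to extend it to an induced $P_4$ by using another component of $G-S$. By Lemma \ref{two components}, $s$ is adjacent to at least two components of $G-S$, so $s$ has a neighbour in $D(e)$. Choose a component $H$ of $D(e)$ and a vertex $w\in V(H)$ with $sw\in E(G)$. Then $w\,s\,u\,v$ is an induced $P_4$: the vertex $w$ lies in a different component of $G-S$ from $u$ and $v$, so it is adjacent to neither, and $vs\notin E(G)$.

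The isolated vertex will be the non-neighbour $x$. We need $x$ to be non-adjacent to $w$, $s$, $u$ and $v$.
\begin{itemize}
\item $x$ is not adjacent to $u$ or $v$, since $x$ and $e$ lie in different components of $G-S$.
\item $x$ is not adjacent to $s$, by assumption.
\item If $x$ lies in a component of $D(e)$ other than $H$, then $x$ is not adjacent to $w$ either.
\end{itemize}
In that last situation $\{x,w,s,u,v\}$ induces $K_1\cup P_4$, a contradiction.

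The remaining difficulty, and the real obstacle, is that every non-neighbour of $s$ might lie in the same component $H$ as all of $s$'s neighbours in $D(e)$. Lemma \ref{two components} only guarantees that $s$ meets two components of $G-S$, one of which may be $C(e)$. This is where $|S|\geq 3$ is used: since $\omega(G-S)=|S|\geq 3$, the graph $D(e)$ has at least two components.

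To handle this, let $x\in V(H')$ be a non-neighbour of $s$.
\begin{itemize}
\item If $s$ has a neighbour $w$ in some component other than $H'$, the construction above applies with $H\neq H'$.
\item Otherwise $N_{D(e)}(s)\subseteq V(H')$, and there is a component $H''\neq H'$ of $D(e)$. Take any $z\in V(H'')$; then $z$ is adjacent to none of $s,u,v$. Since $s$ has a neighbour in $H'$ and $H'$ is connected, pick $w\in N_{H'}(s)$, which gives the induced $P_4$ $w\,s\,u\,v$. Now $z$ is not adjacent to $w$, because they lie in different components. So $\{z,w,s,u,v\}$ is again an induced $K_1\cup P_4$, a contradiction.
\end{itemize}

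In fact this second configuration shows something stronger: whenever $s$ has any neighbour in $D(e)$, every other component of $D(e)$ produces a forbidden subgraph unless $s$ dominates it. Hence $s$ must be adjacent to every vertex of every component of $D(e)$. Once both configurations are excluded, $s$ is adjacent to all vertices of $D(e)$, and the same holds for $A_3$ by symmetry.
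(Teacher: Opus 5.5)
Your proof is correct and follows the paper's argument. The induced path $w\,s\,u\,v$ (or $w\,s\,v\,u$ for $A_3$), with $w$ a neighbour of $s$ in one component of $D(e)$, forces $s$ to dominate every other component of $D(e)$; then $|S|\geq 3$ supplies a second component, whose vertices force $s$ to dominate the first component as well. Your two-case contradiction is only a repackaging of the paper's direct two-step argument, and the connectivity of $H'$ that you invoke is not actually needed.
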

		\begin{proof}[\rm{Proof.}]
			Since $|S|\geq 3$, we have $\omega(G-S)\geq 3$.
			Let $s_i$ be an arbitrary vertex of $A_1\cup A_3$.
			By Lemma \ref {two components}, $s_i$ is adjacent to  at least
			one component of $D(e)$, say $D_1$. Let $t_1\in V(D_1)$ such that $s_it_1\in E(G)$. Then $vus_it_1$ or $uvs_it_1$ is an induced subgraph  $P_4$ of $G$. Since $G$ is $(K_1\cup P_4)$-free,
			$s_i$ is adjacent to all the vertices of $D(e)-D_1$. Let $t_2$ be a vertex in $D(e)-D_1$ such that $s_it_2\in E(G)$. Then $vus_it_2$ or $uvs_it_2$ is an induced subgraph $P_4$ of $G$.
			It follows from $G$ is $(K_1\cup P_4)$-free that $s_i$ is adjacent to all the vertices of $D_1$. Thus, $s_i$ is adjacent to all the vertices of $D(e)$. According to the arbitrary property of $s_i$, each vertex of $A_1\cup A_3$ is adjacent to all the vertices of $D(e)$.
		\end{proof}
		
		\begin{claim}
			\rm
			\label{A4}
			If $A_4\neq \emptyset$, then each vertex of $A_4$ is adjacent to all the vertices of $V(D(e))\cup S'$.
		\end{claim}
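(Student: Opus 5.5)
The plan is to exhibit a single induced $P_4$ through a vertex of $A_4$ that lies entirely inside $C(e)\cup A_4$, and then use $(K_1\cup P_4)$-freeness to force every vertex of $V(D(e))\cup S'$ to be adjacent to that vertex of $A_4$.

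First, fix an arbitrary $s\in A_4$. By the remark after the definition of the partition, $A_4\neq\emptyset$ forces $V(C(e))=\{u,v\}\cup R_1$, and $s$ is adjacent only to $R_1$ within $C(e)$. Since $s\in N_S(C(e))$, we can choose $w\in R_1$ with $sw\in E(G)$.

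Next, check that $s w v u$ is an induced $P_4$.
\begin{itemize}
\item The edges $sw$, $wv$, $vu$ are present.
\item We have $su, sv\notin E(G)$ because $s\in A_4$.
\item We have $wu\notin E(G)$, since $w\in R_1\subseteq L_v$ and $e$ is a bridge of $C(e)$ separating $L_u$ from $L_v$.
\end{itemize}

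Now take any $x\in V(D(e))\cup S'$.
\begin{itemize}
\item If $x\in V(D(e))$, then $x$ lies in a component of $G-S$ different from $C(e)$. Hence $x$ has no neighbour among $u,v,w$.
\item If $x\in S'=S-N_S(C(e))$, then by definition $x$ has no neighbour in $V(C(e))$, so again $x$ is not adjacent to $u,v,w$.
\end{itemize}
In both cases $x\notin\{s,u,v,w\}$. If moreover $xs\notin E(G)$, then $\{x,s,w,v,u\}$ would induce a $K_1\cup P_4$, contradicting that $G$ is $(K_1\cup P_4)$-free. Therefore $xs\in E(G)$.

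Since $x$ and $s$ were arbitrary, every vertex of $A_4$ is adjacent to all vertices of $V(D(e))\cup S'$. I expect no real obstacle here. The only point needing care is justifying $wu\notin E(G)$, which relies on $e$ being a bridge of $C(e)$ with $R_1\subseteq L_v$.
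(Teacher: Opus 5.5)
Your proof is correct and is essentially the paper's argument: take a neighbour $w\in R_1$ of the $A_4$-vertex, note that $uvws$ is an induced $P_4$, and invoke $(K_1\cup P_4)$-freeness. You also spell out why vertices of $V(D(e))\cup S'$ have no neighbours among $u,v,w$ and why $wu\notin E(G)$, which the paper leaves implicit.
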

		\begin{proof}[\rm{Proof.}]
			By the definition of $A_4$, if $A_4\neq \emptyset$, then $R_1\neq \emptyset$, and each vertex of $A_4$ has at least one neighbor in $R_1$. Let $s_i$ be an arbitrary vertex of $A_4$, and let $y_j$ be a neighbor of $s_i$ in $R_1$. Then $uvy_js_i$ is an induced subgraph $P_4$ of $G$. Since $G$ is $(K_1\cup P_4)$-free, $s_i$ is adjacent to all the vertices of $V(D(e))\cup S'$. By the arbitrary property of $s_i$, we can obtain each vertex of $A_4$ is adjacent to all the vertices of $V(D(e))\cup S'$.
		\end{proof}
		
		\vskip3pt\noindent{\bf Case 2.1.} $V(C(e))=\{u,v\}$.
		
		In this case, $A_4=\emptyset$.
		By Claim \ref{A1A3},  we have $A_1\neq \emptyset$ and $A_3\neq\emptyset$.
		The following discussions are based on whether $A_2$ is an empty set.

		Assume first that 	$A_2\neq \emptyset$. Then $|S|\geq 3$.
		Let $s_i$ be a vertex of $A_i$ for each $i=1,2,3$.
		By Lemma \ref{two components}, $s_2$ has at least one neighbor in $D(e)$.
		Let $t_1,t_2$ be two vertices of $D(e)$ such that $s_2t_1\in E(G)$.
		By Claim \ref{all the vertices of D(e)},
		$s_1$ and $s_3$ are adjacent to both $t_1$ and $t_2$.
		Take $e_9=us_1$.
		By  Lemma \ref{1-tough}, there exists a vertex subset $S_9$ of $G$ such that
		\begin{center}
				$\omega(G - S_9) = |S_9|$ and $\omega((G - e_9) - S_9) = |S_9| + 1.$
		\end{center}
		Since $us_1t_1s_2u$ and $us_1t_2s_3vu$ are two cycles of $G$ containing $e_9$,
		we have   $|S_9|\geq 2$. Then $\omega(G-S_9)\geq 2$, and thus $V(D(e_9))\neq \emptyset$.
		Since $s_1$ is adjacent to all the vertices of $D(e)$,
		it follows that $V(D(e))\subseteq S_9\cup V(C(e_9))$.
		Since $u$ is adjacent to  all the vertices of $\{v\}\cup A_1\cup A_2$, we can obtain   $\{v\}\cup A_1\cup A_2\subseteq S_9\cup V(C(e_9))$.
		Thus, $V(D(e_9))\subseteq A_3\cup S'$.
		
		If $A_3\cap V(D(e_9))\neq \emptyset$, then since $\{v\}\cup V(D(e))\subseteq S_9\cup V(C(e_9))$ and each vertex of  $A_3$ is adjacent to all the vertices of $\{v\}\cup V(D(e))$,
		it follows that $\{v\}\cup V(D(e))\subseteq S_9$, and thus $$|S_9|\geq |V(D(e))|+1\geq |D(e)|+1=\omega(G-S)-1+1=|S|.$$
		Since $V(D(e_9))\subseteq A_3\cup S'$, we have  
		$$\omega(G-S_9)\leq |A_3\cup S'|+1=|A_3|+|S'|+1=|S|-|A_1|-|A_2|+1\leq |S|-1,$$
		which contradicts  $\omega(G-S_9)=|S_9|.$

		Therefore, $A_3\cap V(D(e_9))=\emptyset$, and so $V(D(e_9))\subseteq S'$.
		Let $s'\in S'\cap V(D(e_9))$.
		Since $s_1\in V(C(e_9))$, we have $s_1s'\notin E(G)$.
		By Lemma \ref{two components}, $s'$ is adjacent to at least two components of $D(e)$. Let $t_i$ be a neighbor of $s'$ in $D_i$, where $D_i$ is a component in $D(e)$.
		Since $|S|\geq 3$, by Claim \ref{all the vertices of D(e)},
		$s_1$ is adjacent to all the vertices of $D(e)$.
		Then $s'$ is adjacent to all the vertices of $D(e)-D_i$.
		Otherwise, let $t_i'$ be a vertex of $D(e)-D_i$ such that $t_i's'\notin E(G)$, then the subgraph induced by $\{v,s',t_i,s_1,t_i'\}$ is a $K_1\cup P_4$ of $G$, a contradiction.
		By similar discussion, we can show that
		$s'$ is adjacent to all the vertices of $D_i$.
		Otherwise, let
		$t_j$ be a vertex of $D(e)-D_i$ and let $t_j'$ be a vertex of $D_i$ such that $s't_j'\notin E(G)$, then
		the subgraph induced by $\{v,t_j',s_1,t_j,s'\}$ is
		a $K_1\cup P_4$ of $G$, a contradiction.
		Thus, $s'$ is adjacent to  all the vertices of $D(e)$.
		By the arbitrary property of $s'$,
		each vertex of $S'\cap V(D(e_9))$ is adjacent to all the vertices of $D(e)$. Note that $V(D(e))\subseteq S_9\cup V(C(e_9))$.
		Then	$V(D(e))\subseteq S_9$.
		Since $G$ is $(K_1\cup P_4)$-free and $s_1s'\notin E(G)$, we can obtain  $s_1s_3\in E(G)$ or $s's_3\in E(G)$.
		Otherwise, the subgraph induced by $\{s', s_1,u,v,s_3\}$ is
		a $K_1\cup P_4$ of $G$, a contradiction.
		If $s_1s_3\in E(G)$, then since
		$us_1s_3vu$ is a cycle,
		we have $v\in S_9$ or $s_3\in S_9$.
		If $s_3s'\in E(G)$, then it follows from   $uvs_3s'$ is a path from
		$C(e_9)$ to $D(e_9)$ that
		at least one vertex of $\{v,s_3\}$ is contained in $S_9$.
		Therefore, in all the cases discussed as above, we have   $v\in S_9$ or $s_3\in S_9$. Recalling that $V(D(e))\subseteq S_9$, we can obtain  
		$$|S_9|\geq |V(D(e))|+1\geq |D(e)|+1=\omega(G-S)-1+1=|S|.$$
		Since $V(D(e_9))\subseteq S'$,  we can obtain  
		$$\omega(G-S_9)\leq |S'|+1=|S|-|A_1|-|A_2|-|A_3|+1\leq |S|-3+1=|S|-2, $$
		a contradiction with $\omega(G-S_9)=|S_9|$.

		Suppose now that	$A_2=\emptyset$. This implies that $g(uv)=0$. In this case, 	the edge $e=uv$ satisfies the condition $f(uv)=$ max$\{f(xy):xy \in E(G)\}$.
		Note that $N_G(u)\cup N_G(v)=\{u,v\}\cup A_1\cup A_3$. Therefore,
		$$f(uv)=|N_G(u)\cup N_G(v)|=|A_1|+|A_3|+2.$$
		Assume that $|S|\geq 3$. By Claim \ref{all the vertices of D(e)}, each vertex of $A_1\cup A_3$ is adjacent to all the vertices
		of $D(e)$.
		Let $s_1$ be a vertex of $A_1$.
		Then $\{u,v\}\cup A_1\cup V(D(e))\subseteq N_G(u)\cup N_G(s_1)$.
		Without loss of generality, we may assume that $|A_1|\geq |A_3|$.
		Thus,
		$$f(us_1)=|N_G(u)\cup N_G(s_1)|
		\geq 2+|A_1|+|V(D(e))|\geq 2+|A_1|+|D(e)|=2+|A_1|+\omega(G-S)-1=|A_1|+|S|+1.$$
		Combining  this with the inequality $f(uv)\geq f(us_1)$,  we have  
		$$|A_1|+|S|+1\leq |A_1|+|A_3|+2.$$ Thus,
		\begin{equation}
			|S|\leq |A_3|+1.
		\end{equation}
		Note that $|S|\geq |A_1|+|A_3|$.
		By (1), we can obtain   $|A_1|\leq 1$, and so $|A_1|=1$. Since $|A_1|\geq |A_3|$ and $A_3\neq \emptyset$, we have  $|A_3|=1$. Applying (1) again, we can obtain   $|S|\leq 2$, a contradiction with the assumption that $|S|\geq 3$.
		
		Therefore, $|S|=2$, and thus $\omega(G-S)=2$.
		Note that $A_1\neq \emptyset$ and $A_3\neq \emptyset$. Then
		$S=A_1\cup A_3$.
		Let $A_1=\{s_1\}$ and $A_3=\{s_2\}$.
		Since $N_G(u)\cup N_G(v)=\{u,v,s_1,s_2\}$, we have  $$f(uv)=|N_G(u)\cup N_G(v)|=4.$$
		By Lemma \ref{two components},  $N_{V(D(e))}(s_1)\neq \emptyset$ and $N_{V(D(e))}(s_2)\neq \emptyset$.
		Since $N_G(u)=\{s_1,v\}$ and $\{u\}\cup N_{V(D(e))}(s_1)\subseteq N_G(s_1)$,
		we have  
		$$f(us_1)=|N_G(u)\cup N_G(s_1)|\geq 3+|N_{V(D(e))}(s_1)|.$$
		By the maximality  of	$f(uv)$, it follows that
		$f(us_1)=4$ and $|N_{V(D(e))}(s_1)|=1$. This implies that $s_1s_2\notin E(G)$.
		Similarly, $|N_{V(D(e))}(s_2)|=1$.
		Since $s_1uvs_2$ is an induced subgraph $P_4$ of $G$, each vertex of $D(e)$ is adjacent to at least one of $s_1$ and $s_2$, and thus $|V(D(e))|\leq 2$.
		If $|V(D(e))|=1$, then $G$ is $C_5$, and if $|V(D(e))|=2$, then $G$ is $C_6$.

		\vskip3pt\noindent{\bf Case 2.2.} $V(C(e))=\{u,v\}\cup R_1$.
		
		Denote the vertex set $R_1=\{y_1,y_2,\dots, y_m\}$, here $m=|R_1|\geq 1$.
		Since $v$ is not a cut vertex of $G$, $N_{N_S(C(e))}(R_1)\neq \emptyset$.
		Without loss of generality, let
		$y_1\in R_1$ such that $N_{N_S(C(e))}(y_1)\neq \emptyset$.
		Let $A_1^*$ denote a subset of $A_1$ such that
		$N_{R_1}(A_1^*)\neq R_1$.
		\begin{claim}
			\rm
			\label{A1v1}
			Each vertex of $A_1^*$ is adjacent to all the vertices of $V(D(e))\cup S'$.
		\end{claim}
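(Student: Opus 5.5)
The approach is to exhibit an induced $P_4$ that ends at $s$ and then use $(K_1\cup P_4)$-freeness, exactly as in the proof of Claim \ref{A4}. Let $s\in A_1^*$ be arbitrary. By definition of $A_1^*$, there is a vertex $y_j\in R_1$ with $sy_j\notin E(G)$. Since $s\in A_1$, we have $us\in E(G)$ and $vs\notin E(G)$. Also $y_j\in R_1$ is adjacent to $v$ but not to $u$, since $R_1$ consists of vertices of $G_v$ and $e$ is a bridge of $C(e)$. Hence the four vertices $y_j, v, u, s$ induce the path $y_jvus$. The only possible chords are $y_ju$, $vs$ and $y_js$, and we have just checked that none of them is present.

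Now let $z\in V(D(e))\cup S'$ and suppose $sz\notin E(G)$. We check that $z$ has no neighbour among $y_j,v,u$.
\begin{itemize}
\item If $z\in V(D(e))$, then $z$ lies in a component of $G-S$ different from $C(e)$, so $z$ is nonadjacent to $y_j,v,u$.
\item If $z\in S'=S-N_S(C(e))$, then by definition $z$ has no neighbour in $V(C(e))\supseteq\{u,v,y_j\}$.
\end{itemize}
In either case $\{z,y_j,v,u,s\}$ induces $K_1\cup P_4$, a contradiction. Therefore $s$ is adjacent to every vertex of $V(D(e))\cup S'$. Since $s\in A_1^*$ was arbitrary, the claim follows.

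The only point needing care is the choice of the path: we must take the $P_4$ as $y_jvus$ with $s$ at an end, rather than one involving $y_1$, and use the non-neighbour $y_j$ supplied by $N_{R_1}(A_1^*)\neq R_1$. There is no real obstacle beyond this; the degenerate case $V(D(e))\cup S'=\emptyset$ makes the claim vacuous.
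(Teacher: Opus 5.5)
Your proof is correct and is essentially the paper's argument: the paper uses the same induced path $s_iuvy_j$ (your $y_jvus$ written in reverse), with $y_j\in R_1$ a non-neighbour of $s_i$, and concludes by $(K_1\cup P_4)$-freeness. Your explicit check that vertices of $V(D(e))\cup S'$ have no neighbours among $u,v,y_j$ fills in a step the paper leaves implicit.
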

		\begin{proof}[\rm{Proof.}]
			Let $s_i$ be an arbitrary vertex of $A_1^*$, and let $y_j$ be a vertex of $R_1$ such that  $s_iy_j\notin E(G)$.
			Then $s_iuvy_j$ is an induced subgraph $P_4$ of $G$. Since $G$ is $(K_1\cup P_4)$-free, $s_i$ is adjacent to all the vertices of $V(D(e))\cup S'$. By the arbitrary property of $s_i$, we can obtain   each vertex of $A_1^*$ is adjacent to all the vertices of $V(D(e))\cup S'$.
		\end{proof}

		\vskip3pt\noindent{\bf Case 2.2.1.} $|R_1|\geq 2$.
		
		By Claim \ref{connected}, $y_1$ has at least one neighbor in $R_1$.
		Without loss of generality,
		we	assume that $y_2$ is adjacent to $y_1$.
		Take $e_{10}=vy_1$.
		By  Lemma \ref{1-tough}, there exists a vertex subset $S_{10}$ of $G$ such that
		\begin{center}
			$\omega(G - S_{10}) = |S_{10}|$ and $\omega((G - e_{10}) - S_{10}) = |S_{10}| + 1.$
		\end{center}
		Since $vy_1y_2v$ is a triangle of $G$, $y_2\in S_{10}$.
		We first show   $|S_{10}|\geq 2$.
		Recall that $N_{N_S(C(e))}(y_1)\neq \emptyset$.
		If $y_1$ is adjacent to a vertex of $A_1\cup A_2\cup A_3$,
		say $s_i$, then $vus_iy_1v$ or $vs_iy_1v$ is a
		cycle which contains $e_{10}$ and does not contain $y_2$, and thus $|S_{10}|\geq 2$.
		If $y_1$ is  not adjacent to any vertex of $A_1\cup A_2\cup A_3$, then $y_1$ must be  adjacent to a vertex of $A_4$, say $s_j$. In this case, $A_1^*=A_1$.
		By Claim \ref{A1v1}, each vertex of $A_1$ is adjacent to all the vertices of $D(e)$.
		By Claim \ref{A4}, $s_j$ is  also adjacent to all the vertices of $D(e)$.
		Let $s_l\in A_1$ and $t_1\in V(D(e))$.
		Then $s_lt_1,s_jt_1\in E(G)$ and
		$vy_1s_jt_1s_luv$ is a cycle which contains $e_{10}$ and does not contain $y_2$, and  then  $|S_{10}|\geq 2$.
		Thus, in all the cases as above, $|S_{10}|\geq 2$.
		By Lemma \ref{two components}, $N_{V(D(e_{10}))}(y_2)\neq \emptyset$.
		Since $N_G(v)=A_2\cup A_3\cup R_1\cup \{u\}$ and $N_G(y_2)\subseteq A_1\cup A_2\cup A_3\cup A_4\cup R_1\cup \{v\}$,
		we have  
		$N_{V(D(e_{10}))}(y_2)\subseteq A_1\cup A_4$ and $V(D(e_{10}))\subseteq A_1\cup A_4\cup S'\cup V(D(e))$.
		
		If $N_{V(D(e_{10}))}(y_2)\cap  A_1\neq \emptyset$,
		then $u\in S_{10}$. Let $A_1'=V(D(e_{10}))\cap  A_1$.
		Obviously, $A_1'\neq \emptyset$. Furthermore,
		each vertex of $A_1'$ is not adjacent to $y_1$.
		Then $A_1'\subseteq A_1^*$.
		By Claim \ref{A1v1},
		each vertex of $A_1'$ is adjacent to all the vertices of $V(D(e))\cup S'$.
		First suppose that $(V(D(e))\cup S')\cap V(D(e_{10}))=\emptyset$. Then $V(D(e))\cup S'\subseteq S_{10}$.
		Reminding that $\{u,y_2\}\subseteq S_{10}$, then $$|S_{10}|\geq |S'|+|V(D(e))|+2\geq |S'|+|D(e)|+2=|S'|+\omega(G-S)-1+2=|S'|+|S|+1\geq |S|+1.$$
		Let $A_4'=V(D(e_{10}))\cap  A_4$.
		Since $N_{N_S(C(e))}(y_1)\subseteq V(C(e_{10}))\cup S_{10}$,
		we get that $N_{N_S(C(e))}(y_1)\cap (A_1'\cup A_4')=\emptyset$.
		Since  $N_{N_S(C(e))}(y_1)\neq \emptyset$,
		we have  
		$$|S|\geq |A_1'|+|A_4'|+1.$$
		According to the above discussion, we can obtain   $V(D(e_{10}))=A_1'\cup A_4'$.
		Thus, $$\omega(G-S_{10})=|D(e_{10})|+1\leq |V(D(e_{10}))|+1
		=|A_1'|+|A_4'|+1\leq |S|-1+1=|S|,$$
		contradicting that $\omega(G - S_{10}) = |S_{10}|$.
		Now we suppose that $(V(D(e))\cup S')\cap  V(D(e_{10}))\neq \emptyset$. By Claims \ref{A4} and  \ref{A1v1},  each vertex of $A_1'\cup A_4'$ is adjacent to all the vertices of $V(D(e))\cup S'$. Thus, 
		the vertices of $A_1'\cup A_4'$ together with the vertices of $(V(D(e))\cup S')\cap  V(D(e_{10}))$ are contained in exactly one component of $D(e_{10})$. Then
		$\omega(G-S_{10})=2$, and so $|S_{10}|=2$, which means that  $S_{10}=\{u,y_2\}$. Then $N_{N_S(C(e))}(y_1)\subseteq V(C(e_{10}))$.  By Claims \ref{A4} and \ref{A1v1}, $V(D(e))\subseteq S_{10}\cup V(D(e_{10}))$.
		Therefore, $V(D(e))\subseteq V(D(e_{10}))$ since $S_{10}=\{u,y_2\}$.
		However, each vertex of $N_{N_S(C(e))}(y_1)$ is adjacent to at least one vertex of $D(e)$, which means that $V(C(e_{10}))$ is adjacent to $V(D(e_{10}))$, a contradiction.

		If $N_{V(D(e_{10}))}(y_2)\cap  A_4\neq \emptyset$, then let $s_i$ be a vertex of $N_{D(e_{10})}(y_2)\cap  A_4$. Then $y_1s_i\notin E(G)$ since $y_1\in V(C(e_{10}))$. By Claim \ref{A4}, $s_i$ is adjacent to all the vertices of $D(e)$. Let $t_j$ be an arbitrary vertex of $D(e)$.
		By the definition of $A_4$, $us_i\notin E(G)$, and then
		the subgraph induced by $\{u,y_1,y_2,s_i,t_j\}$ is a
		$K_1\cup P_4$ in $G$, a contradiction.
		
		\vskip3pt\noindent{\bf Case 2.2.2.} $|R_1|=1$.
		
		Denote
		$R_1=\{y_1\}$.
	By the definition of $A_4$, we can obtain  $A_4\subseteq N_G(y_1)$.
		We first list some claims.
		
		\begin{claim}
			\rm
			\label{v1 A1}
			$y_1$ is not adjacent to $A_1$.
		\end{claim}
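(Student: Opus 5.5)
The natural route is by contradiction: assume some $s_1\in A_1$ satisfies $s_1y_1\in E(G)$ and derive a contradiction from Lemma \ref{1-tough} applied to a suitable edge, combined with Lemma \ref{two components} and $(K_1\cup P_4)$-freeness. In Case 2.2.2 we have $V(C(e))=\{u,v,y_1\}$ with $N_{C(e)}(y_1)=\{v\}$. So $s_1$ would lie on the $4$-cycle $us_1y_1vu$, which contains both $e=uv$ and $vy_1$. I would take $e'=vy_1$, let $S''$ satisfy Lemma \ref{1-tough} for $e'$, and note that the cycle forces $u\in S''$ or $s_1\in S''$.

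\emph{Step 1: show $|S''|\ge 2$.} I would do this by exhibiting a second cycle through $vy_1$ that avoids both $u$ and $s_1$. For this I use Lemma \ref{two components} for the original $S$: since $N_{C(e)}(y_1)=\{v\}$ and $v$ is not a cut vertex, the vertices of $N_S(C(e))$ meet $D(e)$. Then I route $v\to A_2\cup A_3\to D(e)\to N_S(y_1)\to y_1$, using Claim \ref{all the vertices of D(e)} (when $|S|\ge 3$) and Claims \ref{A4}, \ref{A1v1} to guarantee that the relevant $S$-vertices see every vertex of $D(e)$. The case $|S|=2$ will be handled separately by hand.

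\emph{Step 2: contradiction with Lemma \ref{two components}.} With $|S''|\ge 2$, every vertex of $S''$ meets two components of $G-S''$. In $G-e'$, the component containing $v$ contains $u$ or $s_1$ (whichever is not in $S''$), since $u,s_1\in N(v)\cup N(u)$.
\begin{itemize}
\item If $u\in S''$: $N_G(u)=\{v\}\cup A_1\cup A_2$, and I show all these neighbours lie in a single component. Here $A_1$ and $A_2$ are tied together through $s_1$, through $y_1$, and through $D(e)$, using the claims above.
\item If $s_1\in S''$: I argue symmetrically, using that $N_G(s_1)\subseteq\{u,y_1\}\cup V(D(e))\cup S$.
\end{itemize}

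When this fails, the fallback is an induced $K_1\cup P_4$. If $y_1$ has a neighbour $s\in A_1$ and $A_1^*\neq\emptyset$ (some $s'\in A_1$ not adjacent to $y_1$), then $s'uvy_1$ is an induced $P_4$, so every other vertex is adjacent to it. Claim \ref{A1v1} forces $s'$ to be adjacent to all of $V(D(e))\cup S'$. I would use this to locate $s$ relative to $s'$ and obtain the contradiction. If instead $A_1^*=\emptyset$, then all of $A_1$ is adjacent to $y_1$. In that case I would try the maximality conditions (i)–(ii) on $e$: compare $g(uv)=|A_2|$ with $g(us_1)\ge |\{y_1\}\cap N(s_1)|+|A_2\cap N(s_1)|$, and compare $f(uv)$ with $f(us_1)$, which picks up $y_1$ and the $D(e)$-neighbours of $s_1$.

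\emph{Expected main obstacle.} The hardest step is Step 1 when $|S|=2$. There Claim \ref{all the vertices of D(e)} is unavailable, so the second cycle avoiding $u$ and $s_1$ must be found by explicitly analysing $A_2$ and $A_3$; $A_3$ may be empty in Case 2.2. I would first try to show directly, via the maximality of $f(uv)$, that $A_2\cup A_3\neq\emptyset$ in this situation.
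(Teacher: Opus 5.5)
\noindent There is a genuine gap, and it comes from the choice of the edge $vy_1$. Step~1 needs a cycle through $vy_1$ that avoids both $u$ and $s_1$. Your route $v\to A_2\cup A_3\to D(e)\to N_S(y_1)\to y_1$ therefore needs a neighbour of $y_1$ in $S$ other than $s_1$. Nothing in your argument excludes $N_G(y_1)=\{v,s_1\}$. In that situation every cycle through $vy_1$ passes through $s_1$. Indeed, $y_1$ is a pendant vertex of the connected graph $G-s_1$, so $S''=\{s_1\}$ satisfies Lemma~\ref{1-tough} for $vy_1$: $\omega(G-s_1)=1$ and $\omega((G-vy_1)-s_1)=2$. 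Lemma~\ref{1-tough} only provides \emph{some} witness set, so the edge $vy_1$ cannot yield a contradiction in this subcase, however Step~2 is carried out. The difficulty is therefore the edge itself, not the $|S|=2$ bookkeeping you flagged. Your planned use of the maximality of $f(uv)$ does give $A_2\cup A_3\neq\emptyset$, but that only secures the first half of the route.

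Step~2 is also unsupported as written. A vertex of $A_1$ not adjacent to $y_1$ is linked to the rest of $N_G(u)$ only through $V(D(e))\cup S'$ and other vertices of $S$, and all of these may lie in $S''$. So the statement that all neighbours of $u$ outside $S''$ lie in one component does not follow from the claims you cite; you would need to count $|S''|$ against $\omega(G-S'')$. In your fallback, note also that $y_1\notin N_G(u)$, so $y_1$ never contributes to $g(us_1)$.

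The missing idea is to use the edge $us_1$ instead, as the paper does. The same $4$-cycle $us_1y_1vu$ then forces $v$ or $y_1$ into the witness set $S_{11}$. These two vertices have tightly controlled neighbourhoods: $N_G(v)\subseteq\{u,y_1\}\cup N_S(C(e))$ and $N_G(y_1)\subseteq\{v\}\cup N_S(C(e))$, while $u$ and $s_1$ lie in $C(e_{11})$.

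The second cycle, avoiding $v$ and $y_1$, runs $u\to s_j\to D(e)\to s_1$ with $s_j\in(A_1-\{s_1\})\cup A_2$. The paper first proves this set is nonempty. Suppose it were empty. Then $g(uv)=0$ is maximal, so $f(uv)$ is maximal. Next, $A_4\neq\emptyset$: otherwise $N_G(u)=N_G(y_1)=\{v,s_1\}$ and $\omega(G-\{v,s_1\})\ge 3$. Finally $f(vy_1)=4+|A_3|+|A_4|>4+|A_3|=f(uv)$, contradicting maximality.

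With $|S_{11}|\ge 2$, Lemma~\ref{two components} applied to $v$ or $y_1$ puts a vertex of $A_3\cup A_4$ into $D(e_{11})$. Hence $|S|\ge 3$. Both $s_1$ and every vertex of $A_3\cup A_4$ are adjacent to all of $D(e)$, so $V(D(e))\subseteq S_{11}$ and $|S_{11}|\ge |S|$. On the other hand, $V(D(e_{11}))\subseteq A_3\cup A_4\cup S'$ gives $\omega(G-S_{11})\le |S|-|A_1|-|A_2|+1\le |S|-1$, a contradiction.
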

		\begin{proof}[\rm{Proof.}]
			Suppose to the contrary that $y_1$ is  adjacent to $A_1$.
			Let $s_i$ be  a vertex of $A_1$ such that $s_iy_1\in E(G)$.
			We first show that $(A_1-\{s_i\})\cup A_2\neq \emptyset$. If not, then  
			 $A_1=\{s_i\}$ and $A_2=\emptyset$.  Therefore, $g(uv)=0$ and
			$f(uv)=$ max$\{f(xy):xy \in E(G)\}$.
			It is claimed  that $A_4\neq \emptyset$. Otherwise, if $A_4=\emptyset$, then to avoid $s_i$ being a cut vertex, there is
			$A_3\neq \emptyset$. Since $g(uv)=0$, $y_1$ is not adjacent to $A_3$. Then
			$N_G(y_1)=\{v,s_i\}$.
			Note that $N_G(u)=\{v,s_i\}$. Then
			$\omega(G-\{v,s_i\})\geq 3$ since $\omega(G-S)\geq 2$,
			which contradicts  $\tau(G)=1$.
			Therefore, $A_4\neq \emptyset$.
			Since $N_G(v)=\{u,y_1\}\cup A_3$ and $N_G(y_1)=\{v,s_i\}\cup A_4$, we have   $$f(uv)=|N_G(u)\cup N_G(v)|=|\{u,v,y_1,s_i\}\cup A_3|=4+|A_3|$$ and
			$$f(vy_1)=|N_G(v)\cup N_G(y_1)|=4+|A_3|+|A_4|\geq 5+|A_3|,$$
			a contradiction to the maximality of $f(uv)$.
			Thus,
			$(A_1-\{s_i\})\cup A_2\neq \emptyset$.
			
			Take $e_{11}=us_i$. 	By  Lemma \ref{1-tough}, there exists a vertex subset $S_{11}$ of $G$ such that
			\begin{center}
				$\omega(G - S_{11}) = |S_{11}|$ and $\omega((G - e_{11}) - S_{11}) = |S_{11}| + 1.$
			\end{center}
			Since $us_iy_1vu$ is a cycle of $G$, we have   $v\in S_{11}$ or $y_1\in S_{11}$.
			Recall that $(A_1-\{s_i\})\cup A_2\neq \emptyset$. Let $s_j$ be a vertex of $(A_1-\{s_i\})\cup A_2$.
			If $|S|=2$, that is, $S=\{s_i,s_j\}$,
			then $|D(e)|=1$.
			By Lemma \ref{two components},
			$N_{V(D(e))}(s_i)\neq \emptyset$ and $N_{V(D(e))}(s_j)\neq \emptyset$. Let $t_1\in N_{V(D(e))}(s_i)$ and $t_2\in N_{V(D(e))}(s_j)$. Here, $t_1$ and $t_2$ may be the same vertex.
			Since $D(e)$ is a connected component, a path from $t_1$ to $t_2$ in $D(e)$, together with the path $t_1s_ius_jt_2$ consist of a cycle which contains $e_{11}$ and  does not contain $v$ and $y_1$. Thus, $|S_{11}|\geq 2$.
			If $|S|\geq 3$,  then by Lemma \ref{two components},
			let $t_3$ be a vertex of $D(e)$ such that
			$s_jt_3\in E(G)$.
			By Claim \ref{all the vertices of D(e)}, $s_it_3\in E(G)$. Then $us_it_3s_ju$ is also a cycle which contains $e_{11}$ and does not contain $v$ and $y_1$.
			Hence, $|S_{11}|\geq 2$.
			Therefore, in all the cases discussed above, $|S_{11}|\geq 2$.

			By Lemma \ref{two components}, $N_{V(D(e_{11}))}(v)\neq \emptyset$ if $v\in S_{11}$ and $N_{V(D(e_{11}))}(y_1)\neq \emptyset$ if $y_1\in S_{11}$.
			Since $N_G(v)\subseteq \{u,y_1\}\cup N_S(C(e))$ and $N_G(y_1)\subseteq \{v\}\cup N_S(C(e))$, it follows that
			$N_S(C(e))\cap V(D(e_{11}))\neq \emptyset$.
			Since $u$ is adjacent to all the vertices of $A_1\cup A_2$, we have   $(A_3\cup A_4)\cap V(D(e_{11}))\neq \emptyset$.
			This also implies that $A_3\cup A_4\neq \emptyset$.
			Combining this with $(A_1-\{s_i\})\cup A_2\neq \emptyset$, we know   $|S|\geq 3$.
			By Claims \ref{all the vertices of D(e)} and  \ref{A4},
			each vertex of $A_1\cup A_3\cup A_4$ is adjacent to all the vertices of $D(e)$.
			Since $s_i\in A_1\cap V(C(e_{11}))$ and $(A_3\cup A_4)\cap
			V(D(e_{11}))\neq \emptyset$,
			we have   
			$V(D(e))\subseteq S_{11}$. Reminding that $v\in S_{11}$ or $y_1\in S_{11}$, then
			$$|S_{11}|\geq |V(D(e))|+1\geq |D(e)|+1=\omega(G-S)-1+1=|S|.$$
			Since $V(D(e_{11}))\subseteq A_3\cup A_4\cup S'$ and $(A_1-\{s_i\})\cup A_2\neq \emptyset$, we can obtain 
			$$\omega(G-S_{11})\leq |A_3|+|A_4|+|S'|+1=|S|-|A_1|-|A_2|+1\leq |S|-2+1=|S|-1,$$
			which contradicts   $|S_{11}|=\omega(G-S_{11})$.
			Therefore, $y_1$ is not adjacent to $A_1$.
		\end{proof}
		
		By Claims \ref{A1v1} and   \ref{v1 A1}, the following claim holds obviously.
		
		\begin{claim}
			\rm
			\label{A1V(D(e))S'}
			Each vertex of $A_1$ is adjacent to all the vertices of $V(D(e))\cup S'$.
		\end{claim}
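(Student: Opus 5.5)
The claim follows by combining Claims \ref{A1v1} and \ref{v1 A1} once we show that $A_1^*=A_1$ in Case 2.2.2. Recall that $A_1^*$ was introduced as the set of vertices $s\in A_1$ for which $N_{R_1}(s)\neq R_1$, i.e.\ those vertices of $A_1$ that miss at least one vertex of $R_1$. This is the reading under which the proof of Claim \ref{A1v1} works: it picks $y_j\in R_1$ with $s_iy_j\notin E(G)$. In the present subcase $R_1=\{y_1\}$, so a vertex $s\in A_1$ lies in $A_1^*$ exactly when $sy_1\notin E(G)$.

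By Claim \ref{v1 A1}, no vertex of $A_1$ is adjacent to $y_1$. Hence every $s\in A_1$ satisfies $N_{R_1}(s)=\emptyset\neq R_1$, and so $A_1^*=A_1$. Claim \ref{A1v1} then says that every vertex of $A_1^*$, and therefore every vertex of $A_1$, is adjacent to all vertices of $V(D(e))\cup S'$.

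For completeness, the underlying argument can also be stated directly. Take $s_i\in A_1$. Then $s_iu\in E(G)$ and $s_iv\notin E(G)$ by the definition of $A_1$, and $s_iy_1\notin E(G)$ by Claim \ref{v1 A1}. Also $uy_1\notin E(G)$, since $y_1\in R_1$ lies at distance $1$ from $v$ in $G_v$, which is separated from $u$ in $C(e)-e$.

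So $s_iuvy_1$ is an induced $P_4$. Any vertex $w\in V(D(e))\cup S'$ has no neighbour among $u,v,y_1$. This holds because $D(e)$ lies in other components of $G-S$ than $C(e)$, and $S'$ consists of vertices of $S$ with no neighbour in $C(e)$. If such a $w$ were non-adjacent to $s_i$, then $\{w,s_i,u,v,y_1\}$ would induce $K_1\cup P_4$, which is impossible. Hence $s_iw\in E(G)$.

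There is no real obstacle here. The only point needing care is to check that each non-adjacency used above follows from the definitions of $C(e)$, $S'$ and $A_1$, together with Claim \ref{v1 A1}.
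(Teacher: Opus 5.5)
Your proposal is correct and follows the paper's own argument, which derives this claim in one line from Claims \ref{A1v1} and \ref{v1 A1}: since $R_1=\{y_1\}$ and $y_1$ has no neighbour in $A_1$, every vertex of $A_1$ lies in $A_1^*$. You rightly read $A_1^*$ vertex by vertex, which is how the proof of Claim \ref{A1v1} uses it. Your direct check via the induced path $s_iuvy_1$ is just the proof of Claim \ref{A1v1} specialised to this case, and each non-adjacency you use is correctly justified.
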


		\begin{claim}
			\label{A3A4}
			\rm
			$y_1$ is adjacent to $A_3\cup A_4$ and $A_3\cup A_4\neq \emptyset$.
		\end{claim}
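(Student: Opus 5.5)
We are in Case 2.2.2: $V(C(e))=\{u,v,y_1\}$ with $R_1=\{y_1\}$. The neighbours of $y_1$ lie in $\{v\}\cup N_S(C(e))$. By Claim \ref{v1 A1}, $y_1$ has no neighbour in $A_1$. So $N_G(y_1)\subseteq \{v\}\cup A_2\cup A_3\cup A_4$, and the plan is to rule out the possibility that $N_G(y_1)-\{v\}\subseteq A_2$. The second assertion, $A_3\cup A_4\neq\emptyset$, follows immediately from the first, so everything reduces to showing that $y_1$ is adjacent to $A_3\cup A_4$.

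First, $N_G(y_1)\neq\{v\}$, since otherwise $v$ would be a cut vertex of $G$; this uses the fact recorded at the start of Case 2.2 that $N_{N_S(C(e))}(y_1)\neq\emptyset$. Now suppose, for a contradiction, that $y_1$ has no neighbour in $A_3\cup A_4$. Then $N_G(y_1)=\{v\}\cup B$ for some nonempty $B\subseteq A_2$. In particular $A_2\neq\emptyset$, so $g(uv)\ge 1$. Pick $s\in B$. Then $v,s,y_1$ form a triangle, and $s\in N_G(u)\cap N_G(v)$.

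The contradiction should come from the extremal choice of $e$, by comparing $g(vs)$ with $g(uv)$. Since $N_G(u)\cap N_G(v)\subseteq S$ and $u$ has no neighbour outside $\{v\}\cup S$, we have $N_G(u)\cap N_G(v)=A_2$, so $g(uv)=|A_2|$. On the other hand, $N_G(v)\cap N_G(s)$ contains $u$, $y_1$, and every vertex of $A_2$ adjacent to $s$. That alone is not enough to beat $|A_2|$. To close the gap, I would use $(K_1\cup P_4)$-freeness in the same style as Claims \ref{all the vertices of D(e)} and \ref{A1V(D(e))S'}.

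By Claim \ref{A1A3}, $A_1\neq\emptyset$; fix $s_1\in A_1$. Then $y_1vus_1$ is an induced $P_4$: we have $y_1u\notin E(G)$, $vs_1\notin E(G)$, and $y_1s_1\notin E(G)$ by Claim \ref{v1 A1}. Because $G$ is $(K_1\cup P_4)$-free, every vertex outside $\{u,v,y_1,s_1\}$ must be adjacent to one of these four. Consequently, every vertex of $V(D(e))\cup S'\cup(A_2-N_G(y_1))\cup A_3\cup A_4$ that is not adjacent to $s_1$ is forced into a specific adjacency pattern.

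I would combine this with the toughness count on the set $\{v\}\cup B$. In $G-(\{v\}\cup B)$, $y_1$ is isolated. If also $u$ and the rest of the graph separate (for instance when $B=A_2$ and $A_3=\emptyset$), then $\omega\ge 2$ while the deleted set has size $1+|B|$, so a finer count is needed. The cleaner route is probably the edge-criticality argument. Take $e'=vy_1$ and let $S'$ be a set given by Lemma \ref{1-tough} for it. The triangle $vsy_1$ forces $B\subseteq S'$. Every neighbour of $y_1$ other than $v$ lies in $A_2\subseteq N_G(v)$, so $N_G(y_1)\subseteq N_G[v]$. This is exactly the situation in the proof of Claim \ref{A1A3}. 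Repeating that argument verbatim, with $y_1$ in the role of $u$, gives the contradiction. If $|S'|\ge2$, then $y_1$ is not in $S'$ but is separated from $v$, so $y_1$ touches only the component containing $v$... Rather, we should mirror it exactly: consider the edge $vs$ with $s\in B$. The triangle $y_1vs$ forces $y_1\in S_{vs}$. If $|S_{vs}|\ge2$, then, because $N_G(y_1)\subseteq N_G[v]$, $y_1$ is adjacent to exactly one component, contradicting Lemma \ref{two components}. If $|S_{vs}|=1$, Lemma \ref{triangle} gives a further neighbour of $y_1$ that is not adjacent to $v$, contradicting $N_G(y_1)\subseteq N_G[v]$.

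The main obstacle is making sure this transplant of the argument from Claim \ref{A1A3} is valid, in particular that $N_G(y_1)\subseteq N_G[v]$ really holds under the assumption that $y_1$ has no neighbour in $A_1\cup A_3\cup A_4$. It does, since every neighbour of $y_1$ is either $v$ itself or lies in $A_2\subseteq N_G(v)$.
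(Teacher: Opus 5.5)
Your final argument is correct and is essentially the paper's: apply Lemma \ref{1-tough} to the edge $vs$ with $s\in A_2\cap N_G(y_1)$, observe that the triangle $vsy_1$ forces $y_1$ into the separating set, and then use $N_G(y_1)\subseteq N_G[v]$ to contradict Lemma \ref{two components}. Your separate $|S_{vs}|=1$ subcase is not needed: since $s\in A_2$, the vertex $u$ is also a common neighbour of $v$ and $s$, so the triangle $uvs$ puts $u$ into the set as well and gives $|S_{vs}|\ge 2$ directly, which is how the paper argues. The earlier detours (comparing $g(vs)$ with $g(uv)$, the induced $P_4$ $y_1vus_1$, and the edge $vy_1$) can simply be dropped.
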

		\begin{proof}[\rm{Proof.}]
			Suppose to the contrary that $y_1$ is not adjacent to $A_3\cup A_4$.
			Note that $v$ is not a cut vertex. This implies that $N_S(y_1)\neq \emptyset$.
			By Claim \ref{v1 A1}, we have $A_2\neq \emptyset$ and $y_1$ is adjacent to $A_2$.
			Let $s_i\in A_2$ such that $y_1s_i\in E(G)$.
			Take $e_{12}=vs_i$. Let
			$S_{12}$ be a  vertex set satisfying Lemma \ref{1-tough} for $e_{12}$.
			Since $uvs_iu$ and $vs_iy_1v$ are two triangles of $G$ containing the edge $e_{12}$, we have   $u\in S_{12}$ and $y_1\in S_{12}$, and thus $|S_{12}|\geq 2$.
			However, since $N_G(y_1)\subseteq\{v\}\cup A_2\subseteq N_G(v)\cup N_G(s_i)$, we can obtain  $y_1$ is adjacent to exactly one component of $G-S_{12}$, which contradicts Lemma \ref{two components}. Thus, $y_1$ is adjacent to $A_3\cup A_4$ and $A_3\cup A_4\neq \emptyset$.
		\end{proof}

		Suppose first that $A_2\neq \emptyset$.
		By Claims \ref{A1A3} and   \ref*{A3A4}, we have   $A_1\neq \emptyset$ and $A_3\cup A_4\neq \emptyset$,
		and so $|S|\geq 3$.
		By Claims \ref{all the vertices of D(e)} and   \ref{A4}, each vertex of $A_1\cup A_3\cup A_4$ is adjacent to all the vertices of $D(e)$.
		Let $s_1$ and $s_2$ be two vertices of $A_1$ and $A_2$, respectively.
		Take $e_{13}=us_1$.
		By  Lemma \ref{1-tough}, there exists a vertex subset $S_{13}$ of $G$ such that
		\begin{center}
			$\omega(G - S_{13}) = |S_{13}|$ and $\omega((G - e_{13})- S_{13}) = |S_{13}| + 1.$
		\end{center}
		By Lemma \ref{two components},  $N_{V(D(e))}(s_2)\neq \emptyset$. Let  $t_1\in N_{V(D(e))}(s_2)$.
		Then $s_1t_1\in E(G)$, and so  $us_1t_1s_2u$ is a cycle of $G$ containing the edge $e_{13}$.
		Let $s_3$ be a vertex of $A_3\cup A_4$.
		Note that $\omega(G-S)=|S|\geq 3$.
		Then there must be a vertex, denoted by $t_2$, which is different from $t_1$ in $V(D(e))$. Thus,
		$s_3t_2,s_1t_2\in E(G)$, and so $us_1t_2s_3vu$ or $us_1t_2s_3y_1vu$ is another cycle which contains $e_{12}$ and does not contain $s_2$ and $z_1$. Therefore,  $|S_{13}|\geq 2$, and so
		$V(D(e_{13}))\neq \emptyset$.
		By Claim \ref{A1V(D(e))S'},	$s_1$ is adjacent to
		all the vertices of $V(D(e))\cup S'$. Then $V(D(e))\cup S'\subseteq V(C(e_{13}))\cup S_{13}$.
		Since $u$ is adjacent to all the vertices of $\{v\}\cup A_1\cup A_2$,  we have   $\{v\}\cup A_1\cup A_2\subseteq V(C(e_{13}))\cup S_{13}$.
		Thus, $V(D(e_{13}))\subseteq \{y_1\}\cup A_3\cup A_4$.
		
		If $A_4\cap V(D(e_{13}))\neq \emptyset$, then by Claim \ref{A4},  each vertex of $A_4$ is adjacent to all the vertices of $V(D(e))\cup S'$,
		it follows that $V(D(e))\cup S'\subseteq S_{13}$.
		Since $uvy_1$ is  a path and $y_1$ is adjacent to each vertex of $A_4$, we have  
		$v\in S_{13}$ or $y_1\in S_{13}$.
		Thus, $$|S_{13}|\geq |V(D(e))|+|S'|+1\geq |D(e)|+|S'|+1=\omega(G-S)-1+|S'|+1=|S|+|S'|\geq |S|.$$
		Note that $V(D(e_{13}))\subseteq \{y_1\}\cup A_3\cup A_4$. If $y_1\in V(D(e_{13}))$, then since $y_1$ is adjacent to all the vertices of $A_4$, it follows that $y_1$ and $A_4\cap V(D(e_{13}))$ are contained in the same component of $D(e_{13})$.
		Therefore, $$\omega(G-S_{13})\leq |A_3|+1+1=|S|-|A_1|-|A_2|-|A_4|-|S'|+2\leq |S|-3-|S'|+2=|S|-1-|S'|\leq |S|-1,$$
		which contradicts   $\omega(G - S_{13}) = |S_{13}|$.
		If $y_1\notin V(D(e_{13}))$, then $V(D(e_{13}))\subseteq  A_3\cup A_4$, and so
		$$\omega(G-S_{13})\leq |A_3|+|A_4|+1=|S|-|A_1|-|A_2|-|S'|+1\leq |S|-2-|S'|+1=|S|-1-|S'|\leq |S|-1,$$
		which contradicts   $\omega(G - S_{13}) = |S_{13}|$.
		Thus, $A_4\cap V(D(e_{13}))=\emptyset$, and so
		$V(D(e_{13}))\subseteq \{y_1\}\cup A_3$.
		
		If $A_3\cap V(D(e_{13}))\neq \emptyset$,
		then since $v$ is adjacent to both $u$ and all the vertices in $A_3$,
	it follows that $v\in S_{13}$.
		By Claim \ref{all the vertices of D(e)}, each vertex of $A_3$ is adjacent to all the vertices of $D(e)$. Therefore,
		$V(D(e))\subseteq  S_{13}$. Thus, $$|S_{13}|\geq |V(D(e))|+1\geq |D(e)|+1=\omega(G-S)-1+1=|S|.$$
		If $y_1\in V(D(e_{13}))$ and $A_4\neq \emptyset$, then
		$$\omega(G-S_{13})\leq |A_3|+1+1=|S|-|A_1|-|A_2|-|A_4|-|S'|+2\leq |S|-3-|S'|+2=|S|-1-|S'|\leq |S|-1.$$
		If $y_1\in V(D(e_{13}))$ and $A_4= \emptyset$, recall that  $y_1$ is adjacent to $A_3\cup A_4$, then $N_{A_3}(y_1)\neq \emptyset$ and
		the vertex of $N_{A_3}(y_1)$ is  either in the same component of $D(e_{13})$ with $y_1$, or contained in $S_{13}$.
		Then
		$$\omega(G-S_{13})\leq (|A_3|-1)+1+1=|A_3|+1=
		|S|-|A_1|-|A_2|-|S'|+1\leq |S|-2-|S'|+1=|S|-|S'|-1
		\leq  |S|-1.$$
		If  $y_1\notin V(D(e_{13}))$, then $V(D(e_{13}))\subseteq  A_3$.
		Therefore, $$\omega(G-S_{13})\leq |A_3|+1=|S|-|A_1|-|A_2|-|A_4|-|S'|+1\leq |S|-2-|A_4|-|S'|+1=|S|-1-|A_4|-|S'|\leq |S|-1.$$
		In each case above, $\omega(G - S_{13}) < |S_{13}|$,
		which contradicts   $\omega(G - S_{13}) = |S_{13}|$.
		
		Thus, $V(D(e_{13}))\cap (A_3\cup A_4)=\emptyset$, and so $V(D(e_{13}))=\{y_1\}$. Then $\omega(G-S_{13})=2$, and thus $|S_{13}|=2$.  It follows from $vy_1\in E(G)$ and $uv\in E(G)$ that
		$v\in S_{13}$.
		Since $us_1t_1s_2u$ is a cycle of $G$ containing $e_{13}$, we have   $t_1\in S_{13}$ or $s_2\in S_{13}$. Thus, $S_{13}=\{v,s_2\}$ or $S_{13}=\{v,t_1\}$.
		This also implies that $S_{13}\cap (A_3\cup A_4)= \emptyset$.
		By Claim \ref{A3A4}, $y_1$ is adjacent to $A_3\cup A_4$.
		Since $V(D(e_{13}))=\{y_1\}$, we have   $N_{A_3\cup A_4}(y_1)\subseteq V(D(e_{13}))$, which contradicts 
		$V(D(e_{13}))\cap (A_3\cup A_4)=\emptyset$.

		Assume now that $A_2=\emptyset$. Then $g(uv)=0$, and so $f(uv)=$ max$\{f(xy):xy \in E(G)\}$ in this case.
		Thus,  $y_1$ is not adjacent to $A_3$.
		Since $\tau(G)=1$, $y_1$ is adjacent to $S$.
		By Claim \ref{v1 A1}, $y_1$ is not adjacent to $A_1$.
		Thus, $A_4\neq \emptyset$.
		Suppose that $|S|\geq 3$.
		Note that $N_G(u)\cup N_G(v)=\{u,v,y_1\}\cup A_1\cup A_3$. Therefore, $$f(uv)=|N_G(u)\cup N_G(v)|=3+|A_1|+|A_3|.$$
		By Claim \ref{A1A3}, $A_1\neq \emptyset$.	Let $s_1$ be a vertex of $A_1$.
		By Claim \ref{all the vertices of D(e)},  $s_1$ is adjacent to all the vertices of $D(e)$.
		Then $\{u,v\}\cup A_1\cup V(D(e))\subseteq N_G(u)\cup N_G(s_1)$, we can obtain   $$f(us_1)= |N_G(u)\cup N_G(s_1)|\geq 2+|A_1|+|V(D(e))|\geq 2+|A_1|+|D(e)|= 2+|A_1|+\omega(G-S)-1=|S|+1+|A_1|.$$
		Since 	$f(uv)\geq f(us_1)$, we can obtain   $|S|+1+|A_1|\leq 3+|A_1|+|A_3|$, and thus
		$$	|A_1|+|A_3|+|A_4|+|S'|=	|S|\leq |A_3|+2.$$
		Therefore, $|A_1|+|A_4|+|S'|\leq 2$. Since $A_1\neq \emptyset$ and $A_4\neq \emptyset$, we have   $|A_1|=1$, $|A_4|=1$ and $|S'|=0$.
		Since $|S|\geq 3$, it follows that $|A_3|\geq 1$.
		Let $s_2$ be a vertex of $A_3$.
		By Claim \ref{all the vertices of D(e)}, $s_2$ is adjacent to all the vertices of $D(e)$. Then
		$\{u,v,y_1\}\cup A_3\cup V(D(e))\subseteq N_G(v)\cup N_G(s_2)$, and thus $$f(vs_2)=|N_G(v)\cup N_G(s_2)|\geq 3+|A_3|+|V(D(e))|\geq 3+|A_3|+|D(e)|=3+|A_3|+\omega(G-S)-1=|S|+2+|A_3|.$$
		Since $f(uv)\geq f(vs_2)$, we have   $|S|+2+|A_3|\leq 3+|A_1|+|A_3|$.
		Thus,  $|S|\leq |A_1|+1=2$, a contradiction with the assumption that $|S|\geq 3$.
		
		Thus, $|S|=2$ and $|D(e)|=1$. Since $A_1\neq \emptyset$ and $A_4\neq \emptyset$, we can obtain   $S=A_1\cup A_4$.
		By Claim \ref{A1V(D(e))S'},
		the unique vertex of $A_1$ is adjacent to all the vertices of $D(e)$.
		Since  $g(uv)=0$,  we have  $|V(D(e))|=1$.
		By Lemma \ref{two components}, the unique vertex of $A_4$ is also adjacent to the vertex of $D(e)$. Thus, $G$ is a hamiltonian graph. By Lemma \ref{hamiltonian}, $G$ is $C_6$. This completes the proof of Theorem \ref{Th1}.
	\end{proof}

\end{document}